\newtheorem{theorem}{Theorem}[section]
\newtheorem{corollary}{Corollary}
\newtheorem{lemma}[theorem]{Lemma}
\newtheorem{proposition}{Proposition}
\newtheorem{conjecture}{Conjecture}
\theoremstyle{definition}
\newtheorem{definition}[theorem]{Definition}
\newtheorem{remark}{Remark}
\newtheorem{example}{Example}[section]
\newcommand{\subG}{{_{G}\,}}
\begin{document}

\title{Geometric Model of the Fracture as a Manifold Immersed in Porous Media} 




\author[1]{Pushpi Paranamana \thanks{pushpi.paranamana@rutgers.edu} }
\author[2]{Eugenio Aulisa \thanks{eugenio.aulisa@ttu.edu}}
\author[2]{Akif Ibragimov \thanks{akif.ibraguimov@ttu.edu}}
\author[2]{Magdalena Toda \thanks{magda.toda@ttu.edu}}
\affil[1]{\small{Department of Mathematics and Computer Science, Rutgers University-Newark, NJ, 07102, USA}}
\affil[2]{\small{Department of Mathematics and Statistics, Texas Tech University, Lubbock, TX, 79409-1042, USA}}

%
%

\vspace{-8ex}
  \date{}
\maketitle 
\begin{abstract}
In this work, we analyze the flow filtration process of slightly compressible fluids in porous media containing  man made fractures with complex geometries. We model the coupled fracture-porous media system where the linear Darcy flow is considered in porous media and the nonlinear Forchheimer equation is used inside the fracture. 

We develop a model to examine the flow inside fractures with complex geometries and variable thickness, on a Riemannian manifold. The fracture is represented as the normal variation of a surface immersed in $\mathbb{R}^3$. Using operators of Laplace Beltrami type and geometric identities, we model an equation that describes the flow in the fracture. A reduced model is obtained as a low dimensional BVP. We then couple the model with the porous media. 

Theoretical and numerical analysis have been performed to compare the solutions between the original geometric model and the reduced model in reservoirs containing fractures with complex geometries. We prove that the two solutions are close, and therefore, the reduced model can be effectively used in large scale simulators for long and thin fractures with complicated geometry.

\end{abstract}

\section{Introduction}

Fractured reservoir modeling is a multi-scale and multi-physics complex problem, where analysis and simulation of this complex system require a deep understanding of the physical processes that describe the flow coupling at different scales. We consider flows in the porous media which include fractures of the same dimension as the reservoir itself. In practice fracture domain is very ``long'' compared to its width, and require a special care from analytical  and numerical point of view. Moreover, the geometry of the fracture domain is a significant characteristic. Effects of complex geometric features of fractures such as variable thickness (which is very small) and the curvature of the fracture's boundary  are usually not taken into consideration during the simulations, which prevents successful reservoir modeling and leads to errors in forecasting the reservoir performance. 

The traditional approach in the reservoir modeling is to simplify the problem by treating the fracture as a 1-D sink with pressure on the fracture equals to the value of the pressure on the well or
with given flux, and the flow occurs in the porous media surrounding the fracture.
Mathematically, this is formulated as a Dirichlet or Neumann boundary condition on the fracture for pressure function, which satisfies an elliptic or parabolic equation of second order. This assumption leads to a significant overestimation of the fracture
capacity. 

In our previous paper \cite{paranamana2019fracture}, we consider the nonlinear nature of the flow inside the fracture and couple it with the flow in the porous media. The main goal of this paper is to incorporate complex geometric features of the fractures into the modeling and explore the pressure distribution of the flow. We develop a framework based on methods of differential geometry to model the fractures with complicated geometries. In our approach, we formulate the
fracture as a 3-D manifold immersed in porous media and we introduce a reduced dimensional nonlinear flow equation inside the fracture. 


We consider slightly compressible fluid flows in the inhomogeneous reservoir fracture domain.  
Due to the heterogeneity of the fractured porous media, the velocity of the flow inside the fracture is much higher than inside porous media. It was observed that, due to high velocity inside fractures, the inertial effects become significant.
Therefore the relation between the flow velocity and the pressure gradient inside the fractures deviates from Darcy's law \cite{bear2013dynamics, forchheimer1901wasserbewegung, dake1983fundamentals}. 

We consider Darcy's law to model the flow in porous media and a generalized nonlinear Forchheimer equation to describe the flow inside the fracture. 
It is evident that the fluid mostly flows towards the fracture first and then transports to the well along the fracture. Consequently, the total production of the hydrocarbon depends on the
capacity of the fracture to \textit{take-in} fluid from the reservoir. This capacity depends on the geometry of the fracture and its conductivity, and it is characterized by the productivity
index of the reservoir fracture system.

In past we proved that an integral functional called diffusive capacity, defined as the total flux on the well
surface divided by the pressure drawdown (difference between average pressures of the
reservoir domain and the well boundary) is mathematically equal
to productivity index \cite{aulisa2009mathematical}. We use the diffusive capacity to characterize the reservoir performance.

We formulate a model to investigate the flow inside fractures with complex geometries. In particular, the fracture is represented as a perturbation across the fracture thickness in the normal direction to the barycentric surface immersed in $\mathbb{R}^3$, with its naturally induced Riemannian metric. Moreover, we formulate the flow equation inside the fracture using the first and second fundamental forms of the surface, geometric identities for Gaussian curvature and mean curvature, and corresponding operators of the Laplace-Beltrami type. 
On the boundary of the domain of the flow (union of porous media and fracture) we impose mixed boundary conditions. 
On the well we impose Dirichlet condition. 
This coupled fluid flow is impossible to solve numerically, since the thickness of the fracture is of the $10^6$ smaller than the length of the fracture. Therefore  we introduce a reduced model for the flow inside the fracture domain.
Moreover, we obtain further simplified models with further assumptions (that can be utilized depending on the size of the fracture thickness and other physical factors).

We theoretically and numerically investigate the difference
between the solutions of the actual model and the reduced models. We confirm the successful implementation of the models, by proving that the solutions of the reduced models are close to the solutions of the actual model, for realistic values of fracture thickness.

Controlling the shape of the fractures in geological reservoirs is challenging. Therefore, our method can be applied mostly for simple fracture geometries. However, the geometric method and the analysis we introduce in this paper are valuable tools in modeling micro fluidic flows and blood flows in arteries and veins \cite{aulisa2017fluid, somaweera2015chip}. Moreover, we believe that this methodology can be served as a foundation for reservoir engineers to model fractures in the future.


\section{Formulation of the problem and preliminary results}

In this section, we summarize important preliminary results on Darcy-Forchheimer equations from our paper \cite{paranamana2019fracture}.

\subsection{Reservoir modeling}
Mathematical framework of the reservoir modeling is based on Darcy-Forchheimer equation, the continuity equation and the state equation \cite{aulisa2009mathematical, dake1983fundamentals, muskat1938flow}. Among various methods that demonstrate non-Darcy case, 
non-linear Forchheimer equation 
is widely utilized \cite{ewing1999numerical, douglas1993generalized, forchheimer1901wasserbewegung, payne1999convergence}. 

The velocity vector field $\mathbf{v}$ and the pressure $p$ in porous media are related by the 
Forchheimer equation given by
\begin{equation}
\frac{\mu}{k}\mathbf{v}+\beta |\mathbf{v}|\mathbf{v}=-\nabla p ,\,\,\end{equation}
where $k$ is the permeability, $\mu$ is the viscosity and $\rho$ is the density of the fluid.
This describes the momentum conservation of the flow. 
\begin{remark}
In our intended application the parameters of the reservoir and the
fracture are isotropic and space dependent. Namely, $k = k_p$ and $\beta= 0$ in the
porous block, and $k = k_f$ and $\beta\neq 0$ in the fracture. 
\end{remark}

The continuity equation takes the form
\begin{equation}
\frac{\partial \rho}{\partial t}+\nabla \cdot (\rho \mathbf{v})=0\,\,.
\end{equation}

For slightly compressible fluids, the state equation is given by
 \begin{equation} \label{slightly com}
	\rho^{\prime}=\gamma^{-1}\rho \,\,\,\,\,\,\left(\rho=\rho_0 \exp^{{\gamma^{-1}}(p-p_0)}\right)\,\,,
		\end{equation}
		where $\gamma$ is the compressibility constant of the fluid.

In natural reservoirs, the dissipation in the porous media is dominant \cite{muskat1938flow}. We assume that the permeability coefficient $k$ is very small. Moreover, for many slightly compressible fluids $\gamma$ is of order $10^8$. \cite{aronson1986porous, dake1983fundamentals, bear2013dynamics}


With these constraints, the system can be rewritten as \cite{paranamana2019fracture}
\begin{align}
\frac{\partial p}{\partial t}=-\gamma \nabla\cdot\mathbf{v}\,\,,\label{continuity eqn}\\
-\nabla p-\frac{\mu}{k}\mathbf{v}-\beta|\mathbf{v}|\mathbf{v}=0 \label{F eqn}\,\,.
\end{align}
\textbf{Darcy-Forchheimer equation:}
The velocity vector field $\mathbf{v}$ can be uniquely represented as a function of the pressure gradient as follows.
\begin{align} 
\mathbf{v} = \mathbf{v}_\beta =  -f_\beta \left
(\|\nabla p\|\right) \nabla p \, , \label{DF eqn} \\
f_\beta\left(\|\nabla p\|\right) = \frac{2}{\alpha+\sqrt {\alpha^2 +4 \beta \|\nabla p\|}} \,. \label{non lin term}
\end{align}
where
$\alpha=\frac{\mu}{k}.$\\
Eq.\eqref{DF eqn} is referred as Darcy-Forchheimer equation.
The velocity defined in Darcy Forchheimer equation \eqref{DF eqn} with $f_\beta$ defined by Eq.\eqref{non lin term}, solves the Forchheimer equation \eqref{F eqn} \cite{paranamana2019fracture}.

	The coefficient $\beta$ in the non linear term of the Eq.\eqref{DF eqn} does not depend on pressure. Changes of density of slightly compressible fluids have a minor impact on changes in the coefficient $\beta$ \cite{dake1983fundamentals}.

\begin{remark}
As $\beta \to 0$ the Darcy-Forchheimer equation reduces to Darcy equation. 
\end{remark}
Function $f_{\beta}$ has important monotonic properties.
\begin{lemma}\label{NSM} 
For $f_{\beta}(\|\eta \|)$ defined as above, the function $f_{\beta}(\|\eta \|)\|\eta \|$ is strictly monotonic on bounded sets. More precisely,
\begin{align}
\Big(f_{\beta}(\|\eta_1 \|) \eta_1 -& f_{\beta}( \|\eta_2 \|) \eta_2 \Big) \cdot (\eta_1 - \eta_2 )  \nonumber \\
& \geq \frac{1}{2}f_{\beta}\big(\max (\|{\eta}_1\| ,\|{\eta}_2\| )\big)\|{\eta}_1 - {\eta}_2 \| ^2\,.
\end{align}
\end{lemma}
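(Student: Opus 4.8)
The plan is to regard $F(\eta) := f_\beta(\|\eta\|)\,\eta$ as a vector field on $\mathbb{R}^n$ and to estimate the quadratic form of its Jacobian along the segment joining $\eta_2$ to $\eta_1$. Writing $r = \|\eta\|$ and abbreviating the radial profile by $g(r) := f_\beta(r)$, I would first record that $g$ is smooth and positive for $r \ge 0$ (since $\alpha = \mu/k > 0$ and $\beta \ge 0$ force $\sqrt{\alpha^2 + 4\beta r} \ge \alpha > 0$), and that $F$ is $C^1$ away from the origin while remaining locally Lipschitz everywhere. Since the segment $\eta(t) = \eta_2 + t(\eta_1 - \eta_2)$ meets the origin for at most one value of $t$, the fundamental theorem of calculus gives
\begin{equation}
\big(F(\eta_1) - F(\eta_2)\big)\cdot(\eta_1-\eta_2) = \int_0^1 \xi^{\top} DF(\eta(t))\,\xi \, dt, \qquad \xi := \eta_1 - \eta_2,
\end{equation}
so it suffices to bound the integrand from below pointwise.

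A direct differentiation yields the Jacobian
\begin{equation}
DF(\eta) = g(r)\,I + \frac{g'(r)}{r}\,\eta\,\eta^{\top}, \qquad r = \|\eta\|,
\end{equation}
whence $\xi^{\top} DF(\eta)\,\xi = g(r)\|\xi\|^2 + \frac{g'(r)}{r}(\eta\cdot\xi)^2$. The crucial structural facts are that $g' \le 0$ (the denominator $\alpha + \sqrt{\alpha^2+4\beta r}$ increases in $r$, so the profile is decreasing) and that Cauchy--Schwarz gives $(\eta\cdot\xi)^2 \le r^2\|\xi\|^2$. Multiplying this last inequality by the nonpositive factor $g'(r)/r$ reverses it, and I obtain
\begin{equation}
\xi^{\top} DF(\eta)\,\xi \ge \big(g(r) + r\,g'(r)\big)\|\xi\|^2 = \phi'(r)\,\|\xi\|^2, \qquad \phi(r) := r\,g(r) = r\,f_\beta(r).
\end{equation}

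The heart of the argument is the clean identity obtained by computing $\phi'$: setting $s := \sqrt{\alpha^2 + 4\beta r}$ and using $4\beta r = s^2 - \alpha^2$, the cross terms collapse and one finds $\phi'(r) = 1/s = 1/\sqrt{\alpha^2 + 4\beta r}$. Two consequences then finish the proof. First, $\phi'$ is nonincreasing in $r$; and since the Euclidean norm is convex, $\|\eta(t)\| \le \max(\|\eta_1\|,\|\eta_2\|) =: R$ for every $t \in [0,1]$, so $\phi'(\|\eta(t)\|) \ge \phi'(R)$. Second, because $\alpha \ge 0$,
\begin{equation}
\phi'(R) = \frac{1}{\sqrt{\alpha^2 + 4\beta R}} \ge \frac{1}{\alpha + \sqrt{\alpha^2 + 4\beta R}} = \tfrac{1}{2}\,f_\beta(R).
\end{equation}
Substituting these two bounds into the integral representation gives exactly the claimed estimate with the constant $\tfrac{1}{2} f_\beta(\max(\|\eta_1\|,\|\eta_2\|))$; on any bounded set $\{\|\eta\| \le M\}$ this constant is at least $\tfrac{1}{2} f_\beta(M) > 0$, which is the asserted strict monotonicity.

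I expect the only genuine subtlety to be the behavior at $\eta = 0$: if $\beta > 0$ the field $F$ fails to be $C^1$ there, so the Jacobian identity must be read on $\mathbb{R}^n\setminus\{0\}$ and the integral representation justified through local Lipschitz continuity (absolute continuity of $F$ along the segment, the segment crossing the origin on a set of measure zero). Everything else is a finite algebraic identity; the one insight that makes it work is recognizing that the correct quantity to differentiate is the radial moment $\phi(r) = r\,f_\beta(r)$ rather than $f_\beta$ itself, after which the comparison $\phi'(R) \ge \tfrac{1}{2} f_\beta(R)$ reduces to the triviality $\alpha \ge 0$.
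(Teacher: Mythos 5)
Your proof is correct, and it is genuinely different from what the paper does: the paper offers no argument at all for Lemma \ref{NSM}, instead citing Lemma 2.4 of Hoang--Ibragimov (with $a=1/2$), which establishes this monotonicity for general Forchheimer polynomials of arbitrary degree. Your argument is a self-contained verification specialized to the two-term law. I checked the key steps: the Jacobian $DF(\eta)=g(r)I+\frac{g'(r)}{r}\eta\eta^{\top}$ with $g=f_\beta$ decreasing, the Cauchy--Schwarz sign reversal giving $\xi^{\top}DF\,\xi\ge\phi'(r)\|\xi\|^2$ with $\phi(r)=rf_\beta(r)$, and the identity $\phi'(r)=(\alpha^2+4\beta r)^{-1/2}$ (which also follows neatly from differentiating $\beta\phi^2+\alpha\phi=r$), together with $\phi'(R)\ge\frac12 f_\beta(R)$ and the convexity bound $\|\eta(t)\|\le\max(\|\eta_1\|,\|\eta_2\|)$ along the segment --- all are correct and deliver exactly the stated constant $\frac12$. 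What your route buys is transparency: the constant $\frac12$ is seen to come from the elementary inequality $\sqrt{\alpha^2+4\beta r}\le\alpha+\sqrt{\alpha^2+4\beta r}$, and no external machinery is needed. What the cited lemma buys is generality: it covers $g$-Forchheimer laws with higher-order terms, where no closed form for $f_\beta$ or $\phi'$ is available and the constant $a$ depends on the polynomial degree. One small remark: your caution about the origin is even less of an issue than you suggest --- since $\alpha>0$, the correction term $g'(r)r^{-1}\eta\eta^{\top}$ has norm $|g'(r)|\,r\to0$ as $\eta\to0$, so $F$ is in fact $C^1$ at the origin with $DF(0)=g(0)I$, and the integral representation needs no exceptional set.
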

The proof follows from Lemma 2.4 in Ref. \cite{hoang2012qualitative} with $a=1/2$.

\begin{lemma}
Let $f_\beta$ be defined by the formula \eqref{DF eqn}. With the above assumptions, the pressure function $p$ satisfies the quasi linear parabolic equation
	\begin{equation}
	\frac{\partial p}{\partial t}=\gamma \nabla \cdot \left(f_\beta(\|\nabla p\|)\nabla p\right) \label{parabolic eqn}.
	\end{equation}
\end{lemma}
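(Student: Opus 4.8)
The plan is to derive the parabolic equation \eqref{parabolic eqn} by direct substitution, since the two ingredients needed are already in hand: the continuity equation \eqref{continuity eqn} and the Darcy-Forchheimer representation \eqref{DF eqn} of the velocity field as a function of the pressure gradient alone. The essential analytic work, namely verifying that $\mathbf{v}=-f_\beta(\|\nabla p\|)\nabla p$ is the unique velocity consistent with the Forchheimer momentum balance \eqref{F eqn}, has already been established in the excerpt, so the task reduces to combining these two relations.

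First I would recall the continuity equation in the reduced form \eqref{continuity eqn}, namely $\partial p/\partial t = -\gamma\,\nabla\cdot\mathbf{v}$, which encodes mass conservation together with the slightly-compressible state equation \eqref{slightly com}. Next I would invoke \eqref{DF eqn}, which expresses the velocity explicitly as $\mathbf{v}=-f_\beta(\|\nabla p\|)\nabla p$ with $f_\beta$ given by \eqref{non lin term}. Substituting this expression for $\mathbf{v}$ into the divergence on the right-hand side of \eqref{continuity eqn} yields
\begin{equation}
\frac{\partial p}{\partial t} = -\gamma\,\nabla\cdot\bigl(-f_\beta(\|\nabla p\|)\nabla p\bigr) = \gamma\,\nabla\cdot\bigl(f_\beta(\|\nabla p\|)\nabla p\bigr),
\end{equation}
which is exactly \eqref{parabolic eqn}.

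Since this is essentially an algebraic substitution, there is no serious obstacle in the derivation itself; the work lies entirely in the already-proved inversion of the Forchheimer relation. The one point worth emphasizing is the legitimacy of replacing $\mathbf{v}$ by a function of $\nabla p$ alone, which rests on the uniqueness of the velocity field solving \eqref{F eqn} guaranteed by \eqref{DF eqn}–\eqref{non lin term}. I would also remark that the resulting equation is genuinely quasilinear and parabolic: the strict monotonicity of the map $\eta\mapsto f_\beta(\|\eta\|)\eta$ from Lemma \ref{NSM} ensures that the spatial operator $\nabla\cdot\bigl(f_\beta(\|\nabla p\|)\nabla p\bigr)$ is monotone, which both justifies the parabolic terminology and underpins the well-posedness of the associated initial–boundary value problem.
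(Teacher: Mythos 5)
Your proposal is correct and follows exactly the paper's own argument: the paper likewise obtains \eqref{parabolic eqn} by substituting the Darcy--Forchheimer representation \eqref{DF eqn} of the velocity into the continuity equation \eqref{continuity eqn}. The additional remarks on uniqueness of the velocity and on monotonicity via Lemma \ref{NSM} are sound but not part of the paper's (one-line) proof.
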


Substituting Eq.~\eqref{DF eqn} to the Continuity equation~\eqref{continuity eqn}, Eq.~\eqref{parabolic eqn} can be obtained.
For more details see \cite{paranamana2018analytical}
\subsection{Diffusive Capacity and Pseudo-Steady State regime (PSS)}
System \eqref{continuity eqn} - \eqref{F eqn} characterizes the fluid flow of well exploitation in a reservoir. 
Analogous to the reservoir engineering concept of productivity index (PI) that is used to measure the capacity and the performance of the well, we introduce the mathematical notion called the diffusive capacity.

Let $\Omega$ be a bounded reservoir domain bounded by the exterior no flux boundary $\Gamma_{out}$ and the well surface $\Gamma_w$. Let $\mathbf{n}$ be the outward unit normal on the piece wise smooth surface $\Gamma_w$.

\begin{definition}Diffusive Capacity.\\
Let the pressure $p$ and the velocity $\mathbf{v}$ form the solution for the system~\eqref{continuity eqn} - \eqref{F eqn} in $\Omega$, with impermeable boundary condition $\mathbf{v}\cdot \mathbf{n} \, \big \rvert_{\Gamma_{out}} = 0$. Then the diffusive capacity is defined by
\begin{equation}
J_p(t)=\frac{\int_{\Gamma_w}\mathbf{v}\cdot \mathbf{n}\, ds }{\bar p_{\Omega}(t)- \bar p_w(t)}\,,\label{DC} 
\end{equation}
where, 
$\bar p_{\Omega}(t)-\bar p_w(t)$ is called the pressure draw down (PDD) on the well, $\bar p_{\Omega}(t)=\frac{1}{\left|\Omega\right|}\int_{\Omega}p\,d\Omega,\,\bar p_w(t)=\frac{1}{\left|\Gamma_w\right|} \int_{\Gamma_w} p\, ds,$\,$\left|\Omega\right|$ is the volume of the reservoir and
$\left|\Gamma_w\right|$is the area of the well.
\end{definition}
It has been observed on field data that when the well production rate $Q(t)= Q= const$, the well productivity index stabilizes to a constant value over time \cite{raghavan1993well}.
\begin{definition} PSS regime.\\
	Let the well production rate Q be time independent: $\int_{\Gamma_w} \mathbf{v}\cdot \mathbf{n}\, ds= Q.$ The flow regime is called a pseudo - steady state (PSS) regime, if the pressure draw down (PDD) = $\bar p_{\Omega}(t)-\bar p_w(t)$ is constant.
	\end{definition}
	
\begin{corollary}
For a PSS regime the diffusive capacity/PI is time invariant.
\end{corollary}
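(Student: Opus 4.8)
The plan is to read off the result directly from the two preceding definitions, since the corollary is essentially an unwinding of the PSS hypotheses rather than a computation. Recall from Eq.~\eqref{DC} that the diffusive capacity is the quotient
\begin{equation*}
J_p(t) = \frac{\int_{\Gamma_w} \mathbf{v}\cdot\mathbf{n}\,ds}{\bar p_{\Omega}(t) - \bar p_w(t)},
\end{equation*}
whose numerator is the instantaneous flux across the well $\Gamma_w$ and whose denominator is the pressure draw down.

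First I would invoke the first defining property of a PSS regime, namely that the well production rate is time independent, $\int_{\Gamma_w}\mathbf{v}\cdot\mathbf{n}\,ds = Q$ with $Q$ constant; this pins down the numerator. Next I would invoke the second defining property, that the PDD $\bar p_{\Omega}(t) - \bar p_w(t)$ is constant in time; this pins down the denominator. Since both the numerator and the denominator are then time independent, the ratio $J_p(t) = Q/\mathrm{PDD}$ is a fixed constant, and therefore the diffusive capacity (equivalently, the productivity index) is time invariant.

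Because the conclusion follows by substitution alone, I do not expect any genuine analytic obstacle here. The only point requiring care is the logical structure: both the constancy of $Q$ and the constancy of the PDD are built into the definition of a PSS regime, so neither needs to be derived from the governing system~\eqref{continuity eqn}--\eqref{F eqn}. In particular, the statement should \emph{not} be confused with the much harder assertion that such a regime is actually attained by the flow; establishing the existence of a PSS regime would require solving the quasilinear parabolic problem~\eqref{parabolic eqn} and proving the long-time stabilization of the pressure draw down, which lies well beyond what this corollary claims.
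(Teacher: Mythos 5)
Your proof is correct and matches the paper's (implicit) reasoning exactly: the corollary is an immediate consequence of the two defining properties of a PSS regime, which fix the numerator and denominator of Eq.~\eqref{DC} as constants. The paper offers no separate argument, and your careful note distinguishing this from the existence of a PSS regime is a sensible clarification rather than a deviation.
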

	
\subsubsection{PSS solution for the Initial Boundary Value Problem.}
Let $\Omega$ be a bounded reservoir domain with impermeable exterior boundary and $p(x,t_0)=p(x_0)$ be the given initial pressure in the reservoir. Assume that the well is operating under time independent constant rate of production Q and the initial reservoir pressure is known.
Then the IBVP that models the oil filtration process can be formulated as 
\begin{align}
\gamma^{-1}\frac{\partial p}{\partial t}= \nabla \cdot \left(f_\beta(\|\nabla p\|)\nabla p\right)\label{parab eqn}\,,\\
\int_{\Gamma_w} f_\beta \left(\|\nabla p\|\right)\frac{\partial p}{\partial \mathbf{n}}  ds=-Q \label{total flux}\,,\\
\frac{\partial p}{\partial \mathbf{n}}\Big \rvert_{\Gamma_{out}} = 0\,,\\
p(x,t_0)= p_0(x) \label{ini pressure}\,.
\end{align}

Since the boundary condition on $\Gamma_w$ is a single integral condition for the total flux, the IBVP \eqref{parab eqn}-\eqref{ini pressure} has infinitely many solutions. So, we constrain the solution to an auxiliary problem by assuming the solution to be constant in space on $\Gamma_w$. A more general situation has been considered in \cite{aulisa2012time}.

\subsubsection{Steady state auxiliary BVP}
Let $Q$ be the rate of production. Assume that the boundary $\partial \Omega$ is smooth. Let $W$ be the solution of the auxiliary steady state BVP:
\begin{align}
-\nabla \cdot \left(f_\beta(\|\nabla W\|)\nabla W \right)&=\frac{Q}{\left|\Omega\right|}\quad \text{in}\, \Omega \,,\label{aux prob}\\
W \big \rvert_{\Gamma_w}=0\,,\\
\frac{\partial W}{\partial \mathbf{n}}\Big \rvert_{\Gamma_{out}} = 0 \label{aux ex boundary}\,.
\end{align}
Through integration by parts we can obtain
\begin{equation}
\int_{\Gamma_w} f_\beta(\|\nabla W\|)\frac{\partial W}{\partial \mathbf{n}}  ds=-Q\,.
\end{equation}

\begin{proposition} \label{soln to aux prob}
	Let $W(x)$ be the solution of the auxiliary problem \eqref{aux prob} - \eqref{aux ex boundary}, then
	\begin{equation}
	p(x,t)=W(x)-\gamma A t + K \label{soln}\,,
	\end{equation}
	where $ A= \frac{Q}{\left|\Omega\right|}$, solves the IBVP \eqref{parab eqn} - \eqref{ini pressure}. For this solution we have the Pseudo Steady State and therefore the diffusive capacity is constant.
\end{proposition}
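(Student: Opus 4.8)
The plan is to verify the claim by direct substitution, exploiting the single structural observation that the time-dependent correction $-\gamma A t + K$ is constant in the spatial variable $x$. Consequently $\nabla p = \nabla W$, so that $\|\nabla p\| = \|\nabla W\|$ and $f_\beta(\|\nabla p\|) = f_\beta(\|\nabla W\|)$ throughout $\Omega$. Every spatial differential operator applied to $p$ therefore coincides with the same operator applied to $W$, and this is precisely what makes the ansatz \eqref{soln} collapse the parabolic problem \eqref{parab eqn}--\eqref{ini pressure} onto the elliptic auxiliary problem \eqref{aux prob}--\eqref{aux ex boundary}.

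First I would check the evolution equation \eqref{parab eqn}. Differentiating in time gives $\gamma^{-1}\partial_t p = \gamma^{-1}(-\gamma A) = -A = -Q/|\Omega|$. On the other hand, since $\nabla p = \nabla W$, the right-hand side equals $\nabla\cdot\!\left(f_\beta(\|\nabla W\|)\nabla W\right)$, which by \eqref{aux prob} is exactly $-Q/|\Omega|$. The two sides agree, so \eqref{parab eqn} holds.

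Next I would dispatch the boundary and initial conditions. The homogeneous Neumann condition \eqref{aux ex boundary} for $W$ transfers verbatim to $p$ on $\Gamma_{out}$ because $\partial p/\partial \mathbf{n} = \partial W/\partial \mathbf{n}$. The integral flux condition \eqref{total flux} follows immediately from the integration-by-parts identity recorded just before the proposition, once more using $f_\beta(\|\nabla p\|)\,\partial p/\partial \mathbf{n} = f_\beta(\|\nabla W\|)\,\partial W/\partial \mathbf{n}$ on $\Gamma_w$. The initial condition \eqref{ini pressure} is where a small caveat enters: the ansatz forces $p(x,t_0) = W(x) - \gamma A t_0 + K$, so it can match $p_0(x)$ only when the prescribed initial pressure differs from $W$ by a spatial constant, and the free parameter $K$ is then fixed by this matching. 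This is exactly the class of PSS-compatible initial data for which the statement is intended, and I would state it as such.

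Finally I would establish the pseudo-steady state and the constancy of the diffusive capacity. Averaging \eqref{soln} over $\Omega$ gives $\bar p_\Omega(t) = \bar W_\Omega - \gamma A t + K$, where $\bar W_\Omega = \frac{1}{|\Omega|}\int_\Omega W\,d\Omega$, while averaging over $\Gamma_w$ and using $W\big\rvert_{\Gamma_w}=0$ gives $\bar p_w(t) = -\gamma A t + K$. The time-dependent terms cancel in the pressure draw-down, leaving $\bar p_\Omega(t) - \bar p_w(t) = \bar W_\Omega$, independent of $t$; the flow is therefore PSS. Moreover $\mathbf{v} = -f_\beta(\|\nabla p\|)\nabla p = -f_\beta(\|\nabla W\|)\nabla W$ does not depend on $t$, so the numerator $\int_{\Gamma_w}\mathbf{v}\cdot\mathbf{n}\,ds$ in \eqref{DC} is constant as well; the conclusion then follows from the PSS corollary. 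There is no genuine obstacle here, as the whole argument is a \emph{verification}; the only point demanding care is the compatibility of the initial data encoded in the constant $K$.
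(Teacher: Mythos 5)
Your proposal is correct and follows essentially the same route as the paper, which simply states that the result follows by substituting \eqref{soln} into Eq.~\eqref{parab eqn} and verifying the boundary and initial conditions; you have merely carried out that substitution explicitly. Your added remark that the initial condition \eqref{ini pressure} is only matched for PSS-compatible data (with $K$ fixed by $p_0$) is a reasonable clarification of a point the paper leaves implicit, not a departure in method.
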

\begin{proof}
Proposition \ref{soln to aux prob} is followed by substituting \eqref{soln} to Eq.\eqref{parab eqn} and verifying boundary and initial conditions \eqref{total flux} - \eqref{ini pressure}.
\end{proof}
The constant $K$ in Eq. \eqref{soln} is a measure of the initial oil reserve.
With Definition \eqref{non lin term} and monotonicity property in Lemma \ref{NSM}, the BVP \eqref{aux prob}-\eqref{aux ex boundary} has a unique weak solution $W(x)$ belonging to $\mathbb{W}^{1,\frac{3}{2}}(\Omega)$ \cite{aulisa2011long}. However, hereafter we only consider solutions $W(x)$ that satisfy $W\in C^2(\bar{\Omega})$ and $W, \nabla W \in C^2(\bar{\Omega})$ \cite{hoang2012qualitative}.

\subsection{Fractured reservoir modeling}
In this section, we introduce a fracture to the reservoir domain. We model the oil filtration process with a constant rate of production $Q$, for a fractured reservoir system. (For more details please see \cite{paranamana2019fracture}.)
Consider a fractured reservoir domain where the exterior boundary  $\Gamma_{out}$ is impermeable.
Let $\Omega_p$ be the porous media domain, $\Omega_f$ be the fracture domain, $\Gamma_f$ be the boundary between the fracture and the porous media, and $\Gamma_{f_{out}}$ be the extreme of the fracture.

\begin{figure}[h]
	\begin{center}
		\includegraphics[scale=0.35]{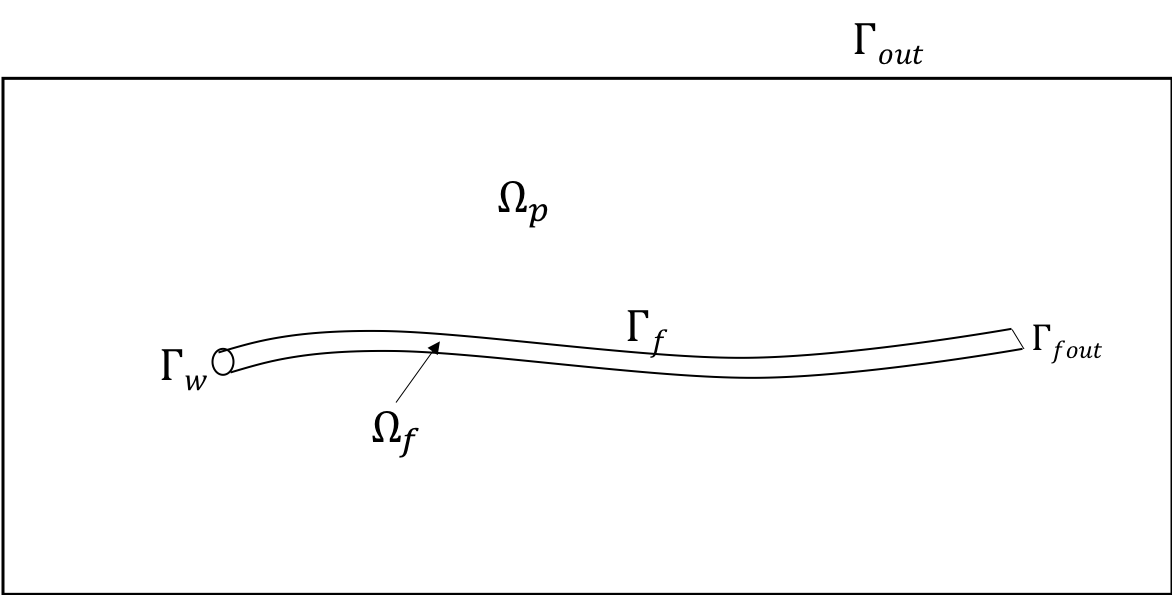}
		\caption{Fractured-Reservoir Domain}	
	\end{center}
\end{figure}
\vspace{-5mm}
Let $\mathbf{v}_p, W_p, k_p$ be the velocity, the pressure and the permeability of the flow in porous media, respectively. Let $\mathbf{v}_f, W_f$ be the velocity and the pressure of flow inside the fracture, respectively.
Let $\mathbf{n}_p$, and $\mathbf{n}_f$ be the unit outward normal vectors 
to the porous medium and the fracture, respectively.

The auxiliary BVP \eqref{aux prob} - \eqref{aux ex boundary} for the above mentioned reservoir-fracture 
system can be modeled as:
\begin{align}
- \nabla \cdot k_p \nabla W_p & =\frac{Q}{\left|\Omega\right|} &\mbox{ in } \Omega_p
 \label{porous pressure}\,,\\
- \nabla \cdot f_\beta(\|\nabla W_f\|) \nabla W_f&=\frac{Q}{\left|\Omega\right|} & \mbox{ in } \Omega_f \label{frac pressure}\,,\\
\mathbf{v}_p \cdot \mathbf{n}_p&=0  & \mbox{ on } \Gamma_{out} \, ,\\
W_p &= W_f  & \mbox{ on } \Gamma_f \cup \Gamma_{f_{out}} \label{mass_cont}\,,\\
\mathbf{v}_p \cdot \mathbf{n}_p &= - \mathbf{v}_f \cdot \mathbf{n}_f  & \mbox{ on } \Gamma_f \cup \Gamma_{f_{out}}\, ,\label{flux_cont}\\
W&=0  & \mbox{ on } \Gamma_w \label{ini well} \,.
\end{align}
Eqs.~\eqref{mass_cont} and \eqref{flux_cont} assure the continuity of the solutions and the continuity of the fluxes
across the interface $\Gamma_f$, respectively. Notice that we consider the linear Darcy law inside the reservoir while the non-linear Forchheimer equation is considered inside the fracture.
%
\section{Geometric modeling of the fracture} 
Fractures in porous media have very complicated geometry. The domain of the fracture is very long compared to its thickness. The fracture thickness is changing along the length and the fracture boundary has curvature. As discussed in the introduction, neglecting those geometric features lead to over estimation of the fracture capacity. In this section, we employ methods in differential geometry to model the fractures with complex geometries. Then we obtain an equation for flow inside these fractures.

First, we introduce some definitions \cite{do1992riemannian, frankel2011geometry, gray1996modern} that we will use in the modeling and obtain some preliminary results.

\pagebreak
\begin{definition} (See \cite{do1992riemannian})
An immersion is a differentiable function between differentiable manifolds whose derivative is everywhere injective. Explicitly, $f : M \to N$ is an immersion if
$D_{p}f:T_{p}M\to T_{f(p)}N$ is an injective function at every point $p$ of $M$ (where $T_pX$ denotes the tangent space of a manifold $X$ at a point $p$ in $X$). 
\end{definition}

Let $\mathbf{R}:D \subset \mathbb{R}^3 \to \mathbb{R}^3$; $\mathbf{R}(u,v,w)=\left\langle X(u,v,w),Y(u,v,w),Z(u,v,w)\right \rangle$ represents an immersion of a three dimensional object $M$ in $\mathbb{R}^3$;
$M=R(D)$ where $D$ is an open simply connected domain.
\begin{definition}
The induced metric associated to $\mathbf{R}(u,v,w)$ is defined as
\begin{align}
G=\begin{bmatrix}G_{ij}\end{bmatrix}&=
\begin{bmatrix}
\langle\mathbf{R}_u , \mathbf{R}_u\rangle & \langle\mathbf{R}_u , \mathbf{R}_v\rangle & \langle\mathbf{R}_u , \mathbf{R}_w \rangle\\
\langle\mathbf{R}_v , \mathbf{R}_u\rangle & \langle\mathbf{R}_v , \mathbf{R}_v\rangle & \langle\mathbf{R}_v , \mathbf{R}_w \rangle\\
\langle\mathbf{R}_w , \mathbf{R}_u\rangle & \langle\mathbf{R}_w , \mathbf{R}_v\rangle & \langle\mathbf{R}_w , \mathbf{R}_w \rangle
\end{bmatrix}
\end{align}
 where $\langle \cdot , \cdot \rangle$ represents the inner product in $\mathbb{R}^3$.
\end{definition}

We will denote by $(M,G)$, the induced Riemannian manifold with metric $G$ on $M$. For the next definitions, please see \cite{frankel2011geometry}.


\begin{definition}
The norm of a vector $\mathbf{\bm \Phi}=\langle\phi^1,\phi^2,\phi^3\rangle$ on the manifold is defined by
\begin{align}
\|\bm{\Phi}\|_\subG =& \sqrt{\sum_{i,j}G_{ij} \phi^i \phi^j} = \sqrt{\langle G \bm{\Phi},\bm{\Phi} \rangle} .\label{norm}
\end{align}
\end{definition}

\begin{definition} 
Let $G^{-1}=G^{ij}$ be the inverse of G. The gradient of a differentiable function $\phi$ on the manifold is defined by
\begin{align} 
\nabla_\subG \phi &= \sum_{i,j}G^{ij}{\partial_i}\phi \,{\partial_j} \,. \label{grad} 
\end{align}
Note:\\
In local coordinates, the $j^{th}$ component of the vector field $\nabla_\subG \phi= \left( \nabla_\subG \phi \right)^j = \sum_{i}G^{ij}{\partial_i}\phi \\= \left( G^{-1} \nabla \phi \right)^j$ with corresponding basis $\left\{ \partial_j \right\}_{j=\{1,2,3\}} = \left\{ \frac{\partial}{\partial x_j}\right\}_{j=\{1,2,3\}} $

\end{definition}

Using Eqs.~\eqref{norm} and ~\eqref{grad}, we obtain the norm of the gradient on the manifold 	
	\begin{align} 
	\|\nabla_\subG \phi\|_\subG^2 &= \sum_{i,j}G^{ij}{\partial_i}\phi \,{\partial_j}\phi= \langle G^{-1} \nabla \phi,\nabla \phi \rangle. \label{grad_norm} 
	\end{align}
\begin{definition}
Let $|G|$ be the determinant of G. The divergence of a vector field $\bm{\Phi}$ on the manifold is defined by

	\begin{equation}\label{div}
	\nabla_\subG \cdot \bm{\Phi}= \sum_{i} \frac{1}{\sqrt{\left| G\right|}}{\partial_i}(\sqrt{\left| G\right|}\phi^i) \,.
	\end{equation}
\end{definition}

Using Eqs.~\eqref{grad} and \eqref{div}, the Laplace-Beltrami operator applied to a scalar $\phi$ is given by
\begin{align}
\Delta_\subG \phi = \nabla_\subG \cdot \nabla_\subG \phi = \frac{1}{\sqrt{\left| G\right|}}\sum_{i,j}{\partial_i}\left(G^{ij}\sqrt{\left| G \right|}{\partial_j}\phi\right) \label{Laplace_bel}\,.
\end{align}

Next, we obtain the Darcy-Forchheimer equation on the manifold $M$. 

\begin{lemma}
Let 
\begin{equation}
f_\beta \left(\|\nabla_\subG p\|_\subG\right)= \dfrac{2}{\alpha + \sqrt{\alpha^2 +4 \beta \|\nabla_\subG p\|_\subG}} \label{fbeta_man}, 
\end{equation}
where $p$ is the pressure.
 Then the velocity defined by the Darcy Forchheimer equation on the manifold M
\begin{equation}
\mathbf{v}= -f_\beta \left(\|\nabla_\subG  p\|_\subG\right) \nabla_\subG p\,, \label{DF_M}
\end{equation}
 solves the Forchheimer equation on M , 
 \begin{equation}
   \alpha \mathbf{v} + \beta \| \mathbf{v} \|_\subG \mathbf{v}=-\nabla_\subG p\,. \label{F_M}
  \end{equation}
\end{lemma}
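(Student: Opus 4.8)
The plan is to mirror the Euclidean computation establishing that \eqref{DF eqn} solves \eqref{F eqn}, the only genuinely new ingredient being the behaviour of the Riemannian norm $\|\cdot\|_\subG$ under scalar multiplication. For brevity I would set $\zeta = \|\nabla_\subG p\|_\subG$ and $f = f_\beta(\zeta)$, so that by \eqref{DF_M} we have $\mathbf{v} = -f\,\nabla_\subG p$; note that $f_\beta(\zeta) > 0$ for every $\zeta \geq 0$ from the closed form \eqref{fbeta_man}.

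First I would compute $\|\mathbf{v}\|_\subG$. Since the norm \eqref{norm} is absolutely homogeneous, $\|c\,\bm{\Phi}\|_\subG = |c|\,\|\bm{\Phi}\|_\subG$ for any scalar $c$ (because $\langle G(c\,\bm{\Phi}),c\,\bm{\Phi}\rangle = c^2\langle G\bm{\Phi},\bm{\Phi}\rangle$), and since $f > 0$, it follows that $\|\mathbf{v}\|_\subG = f\,\|\nabla_\subG p\|_\subG = f\zeta$. This is the only place where the manifold structure really enters, and the step I would treat most carefully is the consistency between the norm \eqref{norm} evaluated at the gradient $\nabla_\subG p = G^{-1}\nabla p$ and the intrinsic formula \eqref{grad_norm}; the two agree because $\langle G\,G^{-1}\nabla p, G^{-1}\nabla p\rangle = \langle G^{-1}\nabla p,\nabla p\rangle$ by symmetry of $G^{-1}$.

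Next I would substitute $\mathbf{v} = -f\,\nabla_\subG p$ and $\|\mathbf{v}\|_\subG = f\zeta$ into the left-hand side of \eqref{F_M} and factor out the common vector $\nabla_\subG p$:
\begin{equation}
\alpha\,\mathbf{v} + \beta\,\|\mathbf{v}\|_\subG\,\mathbf{v} = -\bigl(\alpha f + \beta\zeta f^2\bigr)\,\nabla_\subG p.
\end{equation}
It then remains to show that the scalar coefficient equals $1$, i.e.\ that $f = f_\beta(\zeta)$ is the positive root of the quadratic $\beta\zeta\,f^2 + \alpha f - 1 = 0$. I would verify this directly from \eqref{fbeta_man}: rationalizing $\tfrac{2}{\alpha+\sqrt{\alpha^2+4\beta\zeta}}$ (multiplying top and bottom by $-\alpha + \sqrt{\alpha^2+4\beta\zeta}$) yields $\tfrac{-\alpha+\sqrt{\alpha^2+4\beta\zeta}}{2\beta\zeta}$, which is exactly the positive root of that quadratic. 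Hence $\alpha f + \beta\zeta f^2 = 1$, the right-hand side collapses to $-\nabla_\subG p$, and \eqref{F_M} holds.

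Finally I would dispatch the degenerate case $\zeta = 0$, where the quadratic manipulation would divide by $\beta\zeta$: when $\nabla_\subG p = 0$ we have $\mathbf{v} = 0$ and both sides of \eqref{F_M} vanish identically, so the identity holds trivially. I do not expect any real obstacle here, since the entire argument is algebraically identical to the flat case \eqref{F eqn}; the conceptual point is simply that the change of metric is absorbed entirely into the definitions of $\nabla_\subG$ and $\|\cdot\|_\subG$, so the pointwise relation between a vector and its rescaling is exactly what is needed and no curvature or Christoffel-symbol contributions appear.
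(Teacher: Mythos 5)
Your proposal is correct and follows essentially the same route as the paper: substitute $\mathbf{v}=-f_\beta\nabla_\subG p$ into the Forchheimer equation, use $\|\mathbf{v}\|_\subG=f_\beta\|\nabla_\subG p\|_\subG$ to reduce everything to the scalar quadratic $\beta\zeta f^2+\alpha f-1=0$, identify $f_\beta$ as its positive root by rationalization, and treat $\nabla_\subG p=\bm 0$ separately. Your extra remarks on the homogeneity of $\|\cdot\|_\subG$ and the consistency of \eqref{norm} with \eqref{grad_norm} are sound but only make explicit what the paper uses implicitly.
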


\begin{proof}
Using Eq.~$\eqref{DF_M}$, Eq.~$\eqref{F_M}$ can be rewritten as
	\begin{align}
	\left(\beta \sqrt{\left(\nabla_\subG p\right)G  \left(\nabla_\subG p\right)}  f_\beta^2 +\alpha f_\beta-1\right) \nabla_\subG p=\bm 0\,.
		\end{align}
	Using Eq.~\eqref{grad_norm}, we have $\sqrt{\left(\nabla_\subG p\right)G  \left(\nabla_\subG p\right)}= \| \nabla_\subG p \| _\subG$ and therefore
		\begin{align}
	\left(\beta \| \nabla_\subG p \|_\subG  f_\beta^2 +\alpha f_\beta-1\right) \nabla_\subG p=\bm 0\,.
		\end{align}
We need to show that \eqref{fbeta_man} solves the above equation. 
For $\nabla_\subG p = \bm 0$ it is true for any $f_\beta$.  
For $\|\nabla_\subG p\|_\subG\neq0 $, then $f_\beta$ should satisfy
	\begin{equation}
	\beta \| \nabla_\subG p \|_\subG  f_\beta^2 +\alpha f_\beta-1=0\,.
	\end{equation}
	The positive root is given by 
	\begin{equation}
	f_\beta(\|\nabla_\subG p\|_\subG) = \frac{-\alpha+\sqrt{\alpha^2+4\beta \|\nabla_\subG p\|_\subG}}{2\beta\|\nabla_\subG p\|_\subG},\label{aaa}
	\end{equation}
	or equivalently
	\begin{equation}
	f_\beta(\|\nabla_\subG p\|_\subG)= \frac{2}{\alpha+\sqrt {\alpha^2 +4 \beta \|\nabla_\subG p\|_\subG}},
	\end{equation}
	which is obtained multiplying both denominator and numerator of\eqref{aaa} by $$\alpha+\sqrt{\alpha^2+4\beta \|\nabla_\subG p\|_\subG}.$$
	
\end{proof}

\subsection{Geometric model of the fracture as a 3-D manifold} \label{Hyp_Surf}
In this section, we model the fracture as a 3-D manifold immersed in porous media.
In our approach, we formulate the fracture as a parametrized Riemannian hypersurface.
(Please see \cite{paranamana2018analytical} for more details.) We describe the fracture as the normal variation of a surface $\mathbf{r}(u,v)$ immersed in $\mathbb{R}^3$ given by, 
\begin{align}
\mathbf{R}(u,v,\lambda)=\mathbf r (u, v) + \lambda \, \mathbf{n}(u, v),
\end{align} 
 where 
 \begin{align}
 \mathbf{r}(u,v)=\left \langle X(u,v),Y(u,v),Z(u,v)\right \rangle,
\end{align} $\mathbf{n}$ is the outward unit normal vector to the surface such that 
\begin{align}
\mathbf{n} = \dfrac{\mathbf{r}_u \times \mathbf{r}_v}{\|\mathbf{r}_u \times \mathbf{r}_v\|} \label{normal}
\end{align} 
and $\lambda\in[-h(u,v),h(u,v)]$. Here $2h(u,v)$ represents the thickness of the fracture. 
\begin{figure}[h]
\hspace{1cm}
\includegraphics[scale=0.4]{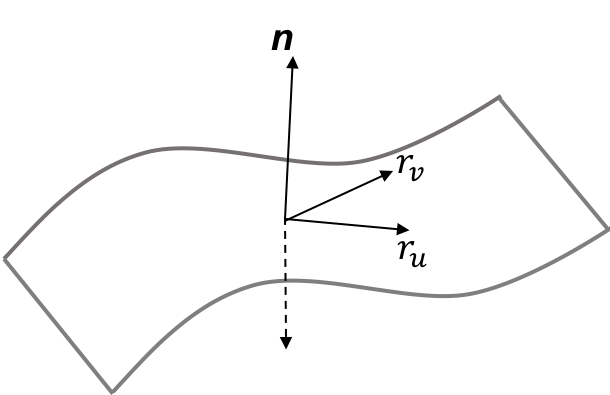}
\hspace{1cm}
\includegraphics[scale=0.5]{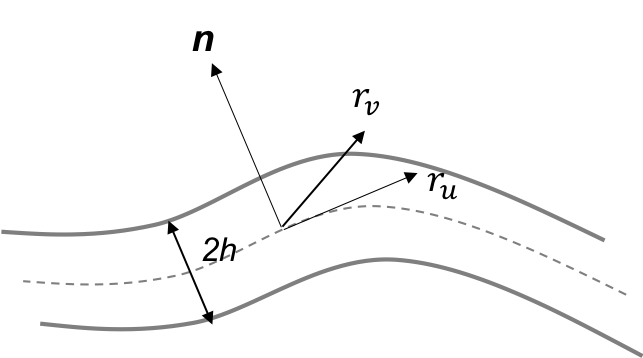}  
\end{figure}

Let $g=g_{ij}$ be the first fundamental form of $\mathbf{r}(u,v)$
 given by
\begin{align}
g=g_{ij}=
\begin{bmatrix}
\langle\mathbf{r}_u , \mathbf{r}_u\rangle & \langle\mathbf{r}_u , \mathbf{r}_v\rangle\\
\langle\mathbf{r}_v , \mathbf{r}_u\rangle & \langle\mathbf{r}_v , \mathbf{r}_v\rangle
\end{bmatrix}.
\end{align}
and second fundamental form  of $\mathbf{r}(u,v)$ is given by
\begin{align}
\begin{bmatrix}
 l & m \\
 m & n 
\end{bmatrix}
=
\begin{bmatrix}
\langle\mathbf{r}_{uu} , \mathbf{n}\rangle & \langle\mathbf{r}_{uv} , \mathbf{n}\rangle\\
\langle\mathbf{r}_{vu} , \mathbf{n}\rangle & \langle\mathbf{r}_{vv} , \mathbf{n}\rangle 
\end{bmatrix}.
\end{align}
Since $$\langle\mathbf{r}_i(u,v),\mathbf{n}(u,v)\rangle=0,\, 
\langle\mathbf{n}_i(u,v,),\mathbf{n}(u,v)\rangle=0\,,
\langle\mathbf{n}(u,v),\mathbf{n}(u,v)\rangle=1,  
$$
for ${i,j}=\{u,v\}$,
the coefficients of the first fundamental form can be rewritten as
$$
G_{11}= g_{11} -2 l\lambda  +  |\mathbf{n}_u|^2 \lambda^2 ,\,\, \,
G_{12}= g_{12} -2m \lambda +  \langle\mathbf{n}_u,\mathbf{n}_v\rangle \lambda^2  ,\,\,\,
G_{22}= g_{22}  - 2 n \lambda  + |\mathbf{n}_v|^2 \lambda^2,$$  $$  
G_{13}= G_{31}=0,\, \,\,
G_{23}= G_{32}=0,\, \,\,
G_{33}= 1.
$$
Let $|g|$ be the determinant of $g$, $K$ be the Gaussian curvature of $\mathbf{r}(u,v)$, and  $H$ be the mean curvature of $\mathbf{r}(u,v)$ given by \cite{do1992riemannian}
$$
|g|=g_{11} g_{22}-g_{12}^2,\,\,\,
K=\frac{ln-m^2}{|g|},\,\,\,
H=\frac{g_{11}\,n - 2 g_{12}\,m + g_{22}\,l}{2|g|}.$$
Let $|G|$ be the determinant and $G^{-1}=G^{ij}$ be the inverse matrix of $G$, respectively. 
We have 
\begin{align}
|G|
=&\;|g|-4H|g|\lambda
+ \Big(4 K |g| + g_{11} |\mathbf{n}_v|^2 -2 g_{12} \langle\mathbf{n}_u,\mathbf{n}_v\rangle 
+g_{22} |\mathbf{n}_u |^2 \Big) \lambda^2 \nonumber \\
&-2\Big(l |\mathbf{n}_v|^2 -2m \langle\mathbf{n}_u,\mathbf{n}_v\rangle 
+n |\mathbf{n}_u|^2 \Big) \lambda^3 
+\Big( |\mathbf{n}_u|^2 |\mathbf{n}_v|^2 - \langle\mathbf{n}_u,\mathbf{n}_v\rangle^2
 \Big) \lambda^4\,. \label{detG}
\end{align}
With this metric $G$ and the coefficients defined above, next, we obtain the equation for pressure of the flow inside the fracture domain. 
\subsection{Flow equation inside the fracture}
Now we model the pressure distribution of a nonlinear flow inside the fracture domain $\Omega_f,$ which is defined as  a general manifold described above. The boundary of the domain of the flow is split as $\partial{\Omega_f}=\Gamma_w \cup \Gamma_f^{\pm} \cup \Gamma_{out}^\pm \cup \Gamma_{f_{out}}.$ 
\pagebreak
\begin{align*}
\Omega_f&=\{(u,v,\lambda):0<u<L, a<v<b, -h(u,v)<\lambda<h(u,v)\}\,,\\
\Gamma_w&=\{(u,v,\lambda):u=0, a<v<b, -h(0,v)<\lambda<h(0,v)\}\,,\\
\Gamma_f^\pm&= \{(u,v,\lambda):0<u<L, a<v<b, \lambda=\pm h(u,v)\}\,,\\
\Gamma_{f_{out}}&=\{(u,v,\lambda):u=L, a<v<b, -h(L,v)<\lambda<h(L,v)\}\,,\\
\Gamma_{out}^+&=\{(u,v,\lambda):v=b, 0<u<L, -h(u,b)<\lambda<h(u,b)\,,\\
\Gamma_{out}^-&=\{(u,v,\lambda):v=a, 0<u<L, -h(u,a)<\lambda<h(u,a)\,.
\end{align*}  
Schematically the domain of the flow with its boundaries is presented in the Figure \ref{manifold}.
\begin{figure}[h!]
\begin{center}
\includegraphics[scale=0.15]{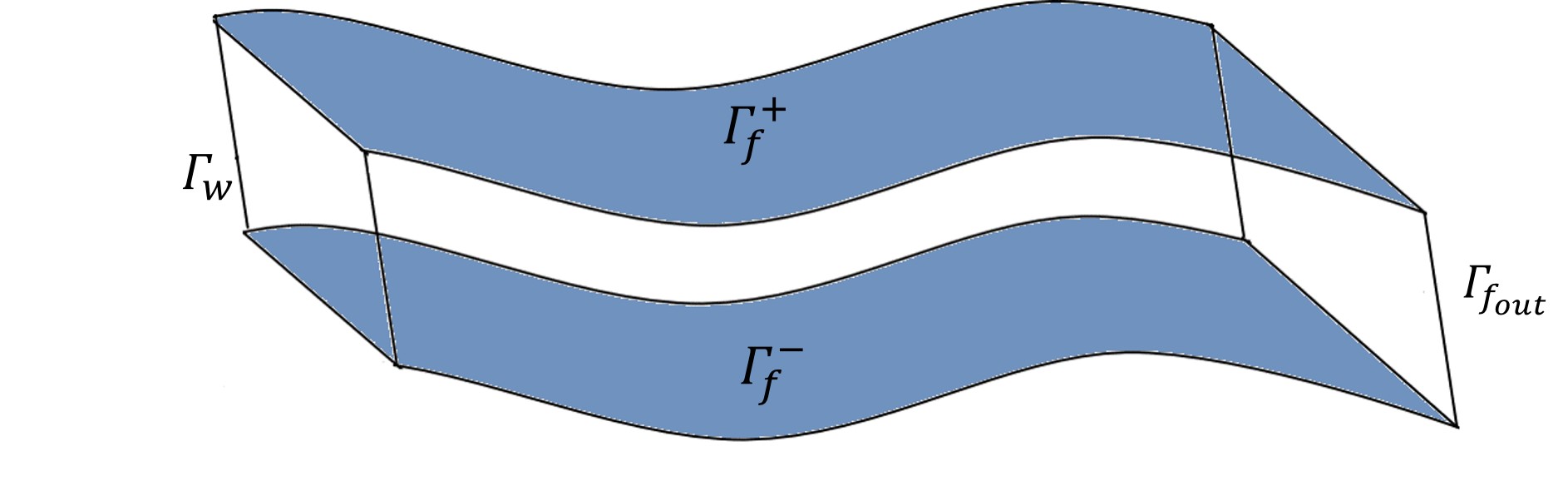}
\caption{Schematic of the fracture domain $\Omega_f$ as a manifold.}\label{manifold}
\end{center} \vspace{-3mm}
\end{figure}

Using Eqs.~\eqref{frac pressure},~\eqref{grad_norm} and \eqref{Laplace_bel} we have the pressure of the flow inside $\Omega_f$ given by the equation
\begin{align}
-\frac{1}{\sqrt{ |G| }}\sum_{i,j}{\partial_i}\left(f_\beta \left(\|\nabla_\subG W\|_\subG\right)\sqrt{{|G|}}\, G^{ij}{\partial_j}W\right) = \frac{Q}{\left|\Omega\right|}\  \  \text{in} \ \Omega_f\,, \label{actual}
\end{align}
or equivalently 
\begin{align}
-\sum_{i,j}{\partial_i}\left(f_\beta \left(\|\nabla_\subG W\|_\subG\right) \sqrt{{|G|}}\, G^{ij}{\partial_j}W\right) = 
\sqrt{ |G|} \frac{Q}{\left|\Omega\right|} \  \text{in} \ \Omega_f\,,\label{actual1}
\end{align}
where	
$$
f_\beta \left(\|\nabla_\subG W\|_\subG\right)= \frac{2}{\alpha + \sqrt{\alpha^2 +4 \beta \|\nabla_\subG W\|_\subG}} ,\,\,
\|\nabla_\subG W\|^2_\subG= \sum_{i,j}G^{ij}{\partial_i}W \,{\partial_j}W ,\,
$$
 $i,j=\{1,2,3 \} \,\, \text{and} \,\, \partial_1 =\partial_u, \partial_2 =\partial_v \,,\,\partial_3 =\partial_{\lambda}.$
Let $\mathbf{n}_f^{\pm}$ be the unit outward normal vector to the top/bottom boundary $\Gamma_f^{\pm}$ and $\mathbf{n}$ be the unit outward normal vector to the boundaries $\Gamma_{f_{out}}$ and $\Gamma_{out}^\pm$.
Following mixed boundary conditions are imposed on the boundaries: Dirichlet condition on the well $\Gamma_w$, flux conditions on the top and the bottom boundaries $ \Gamma_f^\pm$, no flux condition on outer boundaries $\Gamma_{f_{out}}$ and $\Gamma_{out}^\pm .$ Moreover, for simplicity we assume no flow in $v$ direction.  
Namely,
\begin{align}
W&=0 && \mbox{ on } \Gamma_w\,,\label{well}\\
\mathbf{v} \cdot \mathbf{n}_f^{\pm}&=-q^{\pm}(u,v)&& \mbox{ on } \Gamma_f^\pm\,,\\
\mathbf{v} \cdot \mathbf{n}&=0&& \mbox{ on }  \Gamma_{f_{out}} \cup \Gamma_{out}^\pm \label{noFluxBdry}\,.
\end{align}

\subsection{Reduced model of the flow inside the fracture}\label{ModelReduction}
In this section we simplify the original flow equation inside the fracture (given by Eq.~\eqref{actual1}) and obtain a reduced dimensional model for the flow in the fracture. 

Using the fact that $G^{23}=G^{32}=0,\,G^{33}=1$ and integrating Eq.~\eqref{actual1} over the thickness of the fracture, we get
\pagebreak
\begin{align} 
-\frac{Q}{\left|\Omega\right|} \int_{-h(u,v)}^{h(u,v)} \sqrt{ |G|}   d\lambda
=& \int_{-h(u,v)}^{h(u,v)} \Big [ \sum_{i,j}
\partial_i\left( f_\beta \left(\|\nabla_\subG W\|_\subG\right) \sqrt{{|G|}}\, G^{ij}\,\partial_j W  \right) \nonumber \\
& + {\partial_{\lambda}} \left( f_\beta \left(\|\nabla_\subG W\|_\subG\right)\sqrt{{|G|}}\,\partial_\lambda W \right) \Big ]\,\, d \lambda \,, \label{pressure_on_frac}
\end{align}
where $i,j=\{1,2\}$.

\begin{proposition} \label{red_prop}
 Assume that the boundary conditions \eqref{well}-\eqref{noFluxBdry} hold. Then Eq.~\eqref{pressure_on_frac} can be reduced to 
\begin{align} 
-\sum_{i,j} \partial_i  \int_{-h(u,v)}^{h(u,v)}
 &\left( f_\beta \left(\|\nabla_\subG W\|_\subG\right) \sqrt{{|G|}}\, G^{ij}\,\partial_j W \right) d \lambda 
 \nonumber \\
 &\qquad =\frac{Q}{\left|\Omega\right|} \int_{-h(u,v)}^{h(u,v)} \sqrt{ |G|}   d\lambda
+ \tilde{q}^+(u,v) + \tilde{q}^-(u,v) \,. \label{red_eq}
\end{align}
where $i,j=\{1,2\}$,
$$
\tilde{q}^+(u,v)=q^+\sqrt{1+h_u^2+h_v^2}\sqrt{|G|} ,\,\,
\tilde{q}^-(u,v)=q^-\sqrt{1+h_u^2+h_v^2}\sqrt{|G|}.$$
\end{proposition}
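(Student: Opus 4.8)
The plan is to reduce the three‑dimensional integral identity \eqref{pressure_on_frac} to the two‑dimensional equation \eqref{red_eq} by carrying out the $\lambda$‑integration of the normal ($i=j=3$) term exactly, and by commuting the tangential derivatives $\partial_u,\partial_v$ past the $\lambda$‑integral. The two families of boundary terms that this generates—one from the Fundamental Theorem of Calculus, one from the fact that the limits of integration depend on $(u,v)$—must then be shown to recombine into exactly the flux data $\tilde q^{\pm}$.

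First I would split the right‑hand side of \eqref{pressure_on_frac} into the tangential part $\sum_{i,j=1,2}\int_{-h}^{h}\partial_i(\,\cdot\,)\,d\lambda$ and the normal part $\int_{-h}^{h}\partial_\lambda\!\left(f_\beta\sqrt{|G|}\,\partial_\lambda W\right)d\lambda$. The normal part is handled immediately by the Fundamental Theorem of Calculus, producing the pointwise evaluations $\left.f_\beta\sqrt{|G|}\,\partial_\lambda W\right|_{\lambda=\pm h}$. For the tangential part I would apply the Leibniz integral rule: since the limits $\pm h(u,v)$ depend on $u$ and $v$, moving each $\partial_i$ ($i=1,2$) outside the integral leaves behind a boundary contribution $-\partial_i h\,\bigl[F_{ij}|_{\lambda=h}+F_{ij}|_{\lambda=-h}\bigr]$, where $F_{ij}=f_\beta\sqrt{|G|}\,G^{ij}\partial_j W$. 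The interior integrals then recombine into $-\sum_{i,j}\partial_i\int_{-h}^{h}F_{ij}\,d\lambda$, which is exactly the left‑hand side of \eqref{red_eq}.

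Second—and this is the crux—I would collect all boundary terms at $\lambda=+h$ and show they assemble into the normal flux through $\Gamma_f^{+}$. Writing $\Gamma_f^{+}$ as the zero set of $\Phi=\lambda-h(u,v)$, so that $\nabla\Phi=\langle -h_u,-h_v,1\rangle$, the decisive step uses the block‑diagonal structure recorded before \eqref{detG} (namely $G^{13}=G^{23}=0$ and $G^{33}=1$) to identify
\[
\partial_\lambda W-\sum_{i,j=1,2}\partial_i h\,G^{ij}\partial_j W=\langle G^{-1}\nabla W,\nabla\Phi\rangle .
\]
Hence the combined boundary term at $\lambda=+h$ equals $-\sqrt{|G|}\,\langle\mathbf v,\nabla\Phi\rangle$ once we substitute $\mathbf v=-f_\beta\nabla_\subG W=-f_\beta G^{-1}\nabla W$ from \eqref{DF_M}. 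Normalizing $\nabla\Phi=\sqrt{1+h_u^2+h_v^2}\,\mathbf n_f^{+}$ and invoking the flux condition $\mathbf v\cdot\mathbf n_f^{+}=-q^{+}$ yields precisely $\tilde q^{+}=q^{+}\sqrt{1+h_u^2+h_v^2}\sqrt{|G|}$. The same bookkeeping at $\lambda=-h$, with $\Gamma_f^{-}$ the zero set of $\lambda+h(u,v)$ and $\mathbf n_f^{-}$ pointing in the decreasing‑$\lambda$ direction, produces $\tilde q^{-}$; substituting both identifications and rearranging gives \eqref{red_eq}.

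I expect the main obstacle to be this second step: keeping every sign and metric factor straight so that the single FTC term and the two Leibniz boundary terms fuse into one normal flux, and in particular confirming that the Euclidean graph factor $\sqrt{1+h_u^2+h_v^2}$—rather than a $G$‑weighted one—is the correct area element. That forces the pairing in the flux conditions \eqref{noFluxBdry} to be read in the ambient parameter coordinates, a convention the computation above must pin down explicitly. By contrast, the conditions on $\Gamma_{out}^{\pm}$ and $\Gamma_{f_{out}}$ play no role in this particular reduction, since those faces are not reached by the $\lambda$‑integration; they would enter only upon a subsequent integration of \eqref{red_eq} in $u$ and $v$.
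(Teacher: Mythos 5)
Your proposal is correct and follows essentially the same route as the paper: the $\partial_\lambda$ term is integrated by the Fundamental Theorem of Calculus, the tangential derivatives are commuted past the $\lambda$-integral via the Leibniz rule, and the resulting boundary evaluations at $\lambda=\pm h$ are recombined—using $G^{13}=G^{23}=0$, $G^{33}=1$, the normal $\mathbf{n}_f^{\pm}=\langle \mp h_u,\mp h_v,\pm 1\rangle/\sqrt{1+h_u^2+h_v^2}$, and the flux conditions—into $\tilde q^{\pm}$. Your level-set bookkeeping with $\Phi=\lambda-h(u,v)$ is only a cleaner restatement of the paper's computation in Eqs.~\eqref{flux+}--\eqref{q+,q-}, and your remark about reading the pairing in ambient parameter coordinates correctly pins down the convention the paper uses implicitly.
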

\begin{proof}
Using the Leibniz rule, the first integral of the right hand side of Eq.~\eqref{pressure_on_frac} can be rewritten as
\begin{align}
\nonumber \int_{-h}^{h} 
 \sum_{i,j}& \partial_i \big( f_\beta  \left(\|\nabla_\subG W\|_\subG\right)  \sqrt{{|G|}}\, G^{ij}\,\partial_j W \big) d \lambda = \nonumber \\
&\sum_{i,j}\Big[\partial_i  \int_{-h}^{h}
 \left( f_\beta \left(\|\nabla_\subG W\|_\subG\right) \sqrt{{|G|}}\, G^{ij}\,\partial_j W \right)  d \lambda \nonumber \\
 &\qquad  - h_i \Big(  \left( f_\beta \left(\|\nabla_\subG W\|_\subG \right) \sqrt{{|G|}}\, G^{ij}\,\partial_j W \right) \Big \rvert_{h} \nonumber \\
 &\qquad \qquad +\left( f_\beta \left(\|\nabla_\subG W\|_\subG\right) \sqrt{{|G|}}\, G^{ij}\,\partial_j W \right) \Big \rvert_{-h} \Big)\Big] \label{int_1} \,,
\end{align}
where $h_1=h_u$ and $h_2=h_v$,
and the second integral can be rewritten as
\begin{align}
\int_{-{h}}^{h} {\partial_{\lambda}} \left( f_\beta \left(\|\nabla_\subG W\|\right) \sqrt{{|G|}}\,\partial_\lambda W \right) \, d \lambda 
=& \left( f_\beta \left(\|\nabla_\subG W\|_\subG\right) \sqrt{{|G|}}\,\partial_\lambda W  \right) \Big \rvert_{\, h} \nonumber\\
-& \left( f_\beta \left(\|\nabla_\subG W\|_\subG\right) \sqrt{{|G|}}\,\partial_\lambda W  \right) \Big \rvert_{\,-h}. \label{int_2}
\end{align}
Due to Eqs.~ \eqref{grad}, \eqref{DF_M} and \eqref{normal}
\begin{align}
\mathbf{n}_f^+= \frac{\langle -h_u,-h_v,1\rangle}{\sqrt{1+h_u^2+h_v^2}} \,,
\end{align}
\begin{align}
 \mathbf{v} = -f_\beta \nabla_\subG W  = -f_\beta \sum_{i,j} G^{ij} \partial_i W \partial_j \,.
 \end{align}
 
 \pagebreak
Therefore, on $\Gamma^+_f$ we have,
\begin{align}
&\Big[f_\beta \sqrt{|G|}  \Big(\left(-h_u G^{11}-h_v G^{21}\right)\partial_u W +\left(-h_u G^{12}-h_v G^{22}\right)\partial_v W + \partial_ \lambda W \Big)\Big] \bigg \rvert_{h} \nonumber \\
& =  f_\beta \sqrt{|G|} \langle G^{11}\partial_u W +G^{12}\partial_v W, G^{21}\partial_u W +G^{22}\partial_v W, \partial_\lambda W\rangle \cdot \langle -h_u,-h_v,1\rangle ^{T}
\nonumber \\
&= \sqrt{|G|}\,\,f_\beta \nabla_\subG W \cdot \mathbf{n}_f^+\sqrt{1+h_u^2+h_v^2}\nonumber \\
&=\tilde{q}^+(u,v) \, \label{flux+}.
\end{align}
Similarly we can show that on $\Gamma^-_f$,
\begin{align}
\Big[f_\beta \sqrt{|G|}  &\Big(\left(-h_u G^{11}-h_v G^{21}\right)\partial_u W +\left(-h_u G^{12}-h_v G^{22}\right)\partial_v W + \partial_ \lambda W \Big)\Big] \bigg 
\rvert_{-h} \nonumber \\
&= \tilde{q}^-(u,v) \, \label{flux-}.
\end{align}
Hence,
\begin{align}
-h_i \Big(  \big( f_\beta  & \sqrt{{|G|}}\, G^{ij}\,\partial_j W \big) \Big \rvert_{h} +\big( f_\beta \sqrt{{|G|}}\, G^{ij}\,\partial_j W \big) \Big \rvert_{-h} \Big)\nonumber \\
 +&\big( f_\beta  \sqrt{{|G|}}\,\partial_\lambda W  \big) \Big \rvert_{ h} - \big( f_\beta \sqrt{{|G|}}\,\partial_\lambda W  \big) \Big \rvert_{-h}= \,\,\tilde{q}^+(u,v) + \tilde{q}^-(u,v) \label{q+,q-}
\end{align}
 Therefore, using Eqs.\eqref{q+,q-},~\eqref{int_1} and \eqref{int_2}, Eq.~\eqref{pressure_on_frac} can be rewritten as,
 \begin{align} 
-\sum_{i,j} \partial_i  \int_{-h(u,v)}^{h(u,v)}
 &\left( f_\beta \left(\|\nabla_\subG W\|_\subG\right) \sqrt{{|G|}}\, G^{ij}\,\partial_j W \right) d \lambda 
 \nonumber \\
 &\qquad =\frac{Q}{\left|\Omega\right|} \int_{-h(u,v)}^{h(u,v)} \sqrt{ |G|}   d\lambda
+ \tilde{q}^+(u,v) + \tilde{q}^-(u,v) \,. 
\end{align}
\end{proof}

The thickness of the fracture is several orders smaller compared to the length of the fracture. Therefore it is reasonable to assume that the flow inside the fracture in the normal direction to the barycentric surface is negligible. With that assumption, we obtain the reduced model for the flow pressure in the fracture.

\begin{proposition}\label{FINAL_THM}
\textbf{Reduced Model I.} 
Let all the conditions of proposition \ref{red_prop} are satisfied. Assume that the gradient of $W$ is independent of $\lambda.$ Then the equation for pressure of the flow inside the fracture can be given by,
\begin{align}\label{FINAL}
 -\sum_{i,j}{\partial_i}(L^{ij}  {\partial_j}W)=&
\frac{Q}{\left|\Omega\right|}A + \tilde{q}^+(u,v)+\tilde{q}^-(u,v) \,, 
\end{align} 
where
$$
L^{ij}= \int_{-h}^h
  \left( f_\beta \left(\|\nabla_\subG W\|_\subG\right) \sqrt{{|G|}}\, G^{ij} \right)  d \lambda,\,\,
 A= \int_{-h}^h \sqrt{{|G|}} d \lambda,$$
$ i,j=\{1,2\}$
\end{proposition}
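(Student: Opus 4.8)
The plan is to start directly from the conclusion \eqref{red_eq} of Proposition \ref{red_prop}, which already absorbs all the Leibniz boundary contributions into the fluxes $\tilde q^{\pm}$, and then to exploit the single new hypothesis — that the gradient of $W$ is independent of $\lambda$ — to move the factors $\partial_j W$ outside the $\lambda$-integral. Concretely, for $j=1,2$ the assumption gives $\partial_\lambda(\partial_j W)=0$, so each $\partial_j W$ is constant across the thickness and may be factored out:
\[
\int_{-h}^{h} f_\beta(\|\nabla_\subG W\|_\subG)\sqrt{|G|}\,G^{ij}\,\partial_j W \, d\lambda = \left( \int_{-h}^{h} f_\beta(\|\nabla_\subG W\|_\subG)\sqrt{|G|}\,G^{ij}\, d\lambda \right)\partial_j W = L^{ij}\,\partial_j W .
\]
Substituting this into the left-hand side of \eqref{red_eq}, and recognizing $A=\int_{-h}^{h}\sqrt{|G|}\,d\lambda$ on the right-hand side, then yields \eqref{FINAL} immediately.

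The one point that will require care — and which I expect to be the only genuine subtlety rather than a true obstacle — is that although $\partial_j W$ may be pulled out, the scalar $f_\beta(\|\nabla_\subG W\|_\subG)$ cannot. Indeed $\|\nabla_\subG W\|_\subG^2 = \sum_{i,j} G^{ij}\,\partial_i W\,\partial_j W$ still depends on $\lambda$ through the inverse metric $G^{ij}$, since $G$ varies with $\lambda$ (see \eqref{detG} and the displayed formulas for the $G_{ij}$). Hence $f_\beta$ genuinely varies across the thickness even when $\nabla W$ does not, and it must remain inside the integral together with $\sqrt{|G|}$ and $G^{ij}$. This is precisely why $L^{ij}$ is defined as the $\lambda$-integral of the full product $f_\beta\sqrt{|G|}\,G^{ij}$, and not as a product of separately averaged factors; I would flag this explicitly so the definition of $L^{ij}$ is not misread.

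Finally, I would note that no commutation or regularity issue arises at the end: the outer derivative $\partial_i$ in \eqref{red_eq} already sits outside the $\lambda$-integral, so after factoring we obtain directly $-\sum_{i,j}\partial_i(L^{ij}\partial_j W)$, with both $L^{ij}=L^{ij}(u,v)$ and $\partial_j W=\partial_j W(u,v)$ now functions of $(u,v)$ alone. The derivation is therefore a short algebraic reduction of \eqref{red_eq}, and the real content of the statement lies in the modeling hypothesis that $\nabla W$ is $\lambda$-independent, which is justified physically by the fracture being several orders of magnitude thinner than it is long.
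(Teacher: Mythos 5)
Your proposal is correct and follows exactly the route the paper intends: the paper gives no separate proof of Proposition \ref{FINAL_THM}, treating it as the immediate consequence of \eqref{red_eq} once $\partial_j W$ ($j=1,2$) is pulled out of the $\lambda$-integral, which is precisely your argument. Your observation that $f_\beta(\|\nabla_\subG W\|_\subG)$ must remain under the integral because $G^{ij}$ still varies with $\lambda$ is a correct and worthwhile clarification of why $L^{ij}$ is defined as the integral of the full product.
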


We use numerical integration (3-point Gauss Quadrature Rule) to evaluate integrals $L^{ij}$ for $i,j \in \{1,2\} $ and $A$.

\subsection{Reduced model of the fracture: when the metric $G$ does not depend on $\lambda$}

For fractures with small thicknesses it is reasonable to assume that the solution of the flow equation in the direction of the thickness does not change, namely, we assume that the solution $W$ does not depend on the parameter $\lambda$ and  consequently the first fundamental form of $\mathbf{R}(u,v,\lambda)$, $G$, does not depend on $\lambda$ as well.  Therefore, next we further simplify the Reduced Model I. 
%
%

\begin{proposition}\label{red_Thm}
\textbf{Reduced Model II.} Let all the conditions of theorem \ref{FINAL_THM} are satisfied.
In addition assume that the first fundamental form of $\mathbf{R}(u,v,\lambda)$ depends on the function $\mathbf{r}(u,v)$ only (does not depend on $\lambda$):
\begin{align}
G=	 G(u,v,0)=
\begin{bmatrix}
g_{11} & g_{12} & 0\\
g_{21} & g_{22} & 0\\
0 & 0 & 1
\end{bmatrix}.
\end{align} 
Then the pressure inside the fracture is subjected to the equation:
\begin{align}
 - 2 \sum_{i,j}{\partial_i}
   \left( h(u,v)f_\beta \left(\|\nabla_g W\|_g \right) \sqrt{{|g|}}\, g^{ij}    {\partial_j}W \right) =&
2 \frac{Q}{\left|\Omega\right|} h(u,v) \sqrt{{|g|}} \nonumber \\
&+\tilde{q}^+(u,v) +\tilde{q}^-(u,v) \,,\label{FINAL2} 
\end{align}
$i,j=\{1,2\}\,, $
where,
$$
\tilde{q}^+(u,v)=q^+\sqrt{1+h_u^2+h_v^2}\sqrt{|g|},\,\,\quad
\tilde{q}^-(u,v)=q^-\sqrt{1+h_u^2+h_v^2}\sqrt{|g|} 
$$
\end{proposition}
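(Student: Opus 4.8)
The plan is to specialize Reduced Model I (Proposition \ref{FINAL_THM}) by exploiting the block structure that the hypothesis imposes on $G$, which collapses the $\lambda$-integrals defining $L^{ij}$ and $A$ into an elementary multiplication by the thickness $2h(u,v)$. First I would record the purely algebraic consequences of assuming $G=G(u,v,0)$ with the stated form: since the matrix is block diagonal with upper-left $2\times 2$ block $g=g_{ij}$ and lower-right entry $G_{33}=1$, one has $|G|=|g|$, hence $\sqrt{|G|}=\sqrt{|g|}$, and $G^{ij}=g^{ij}$ for $i,j\in\{1,2\}$ together with $G^{13}=G^{23}=0$ and $G^{33}=1$. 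Crucially, none of these quantities depends on $\lambda$ any longer.

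Next I would reduce the nonlinear coefficient $f_\beta$. Because $W$ is assumed independent of $\lambda$ we have $\partial_\lambda W=0$, so in the manifold norm
\begin{align}
\|\nabla_\subG W\|_\subG^2 = \sum_{i,j=1}^{3} G^{ij}\,\partial_i W\,\partial_j W
\end{align}
the mixed terms vanish by $G^{13}=G^{23}=0$ and the pure $\lambda$-term vanishes by $\partial_\lambda W=0$, leaving $\|\nabla_\subG W\|_\subG^2=\sum_{i,j=1}^{2} g^{ij}\,\partial_i W\,\partial_j W=\|\nabla_g W\|_g^2$. Consequently $f_\beta(\|\nabla_\subG W\|_\subG)=f_\beta(\|\nabla_g W\|_g)$ is itself $\lambda$-independent.

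With every factor in the integrands of Proposition \ref{FINAL_THM} now constant in $\lambda$, the integrals become trivial:
\begin{align}
L^{ij}&=\int_{-h}^{h} f_\beta(\|\nabla_g W\|_g)\sqrt{|g|}\,g^{ij}\,d\lambda = 2h(u,v)\,f_\beta(\|\nabla_g W\|_g)\sqrt{|g|}\,g^{ij},\\
A&=\int_{-h}^{h}\sqrt{|g|}\,d\lambda = 2h(u,v)\sqrt{|g|}.
\end{align}
Substituting these into Eq.~\eqref{FINAL} yields exactly Eq.~\eqref{FINAL2}, while the definitions of $\tilde q^{\pm}$ transfer verbatim once $\sqrt{|G|}$ is replaced by $\sqrt{|g|}$, which is legitimate precisely because $|G|=|g|$ under the hypothesis.

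The computation is routine, and the only point demanding care is the norm reduction: it is essential that both the off-diagonal blocks vanish ($G^{13}=G^{23}=0$) and that $\partial_\lambda W=0$, since either failing alone would leave a spurious $\lambda$-dependence inside $f_\beta$ and prevent the integrand from being extracted from the integral as a constant. Once that identity is secured, no genuine obstacle remains and the proposition follows by direct substitution into Reduced Model I.
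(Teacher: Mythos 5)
Your proof is correct and follows exactly the route the paper intends: the paper's entire proof is the one-line remark that the result ``follows from theorem \ref{FINAL_THM},'' and your argument simply supplies the omitted details (block structure of $G$ giving $|G|=|g|$ and $G^{ij}=g^{ij}$, the reduction of $\|\nabla_\subG W\|_\subG$ to $\|\nabla_g W\|_g$, and the collapse of $L^{ij}$ and $A$ to $2h$ times their $\lambda$-independent integrands). Your observation that $\partial_\lambda W=0$ is needed for the norm reduction is consistent with the paper's stated assumption, in the paragraph preceding the proposition, that $W$ itself does not depend on $\lambda$.
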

Proof follows from theorem \ref{FINAL_THM}.
\begin{remark}\label{remark_lambda}
The reason why we consider two reduced models is the following.
Reduced Model I is more comprehensive. It originates from the actual model under the assumption that the gradient of the pressure function $W$  does not depend on the parameter $\lambda$ which physically means that  the velocity inside the fracture in the orthogonal direction to the barycentric surface is negligible. In this case, the coefficients of the equation $L^{ij}$ implicitly depend on $\lambda$ through integration.  

Reduced Model II is a simplified version of the Reduced Model I, in which coefficients of the equation do not depend on $\lambda$. Consequently, the corresponding solution does not depend on $\lambda$. It is clear that, as the thickness of the fracture becomes big enough, the solution obtained from the Reduced Model II will significantly deviate from the actual model. In section \ref{NA}, we investigate this numerically in detail.
\end{remark} 

\section{Estimates for the difference between the solutions of the original model and the reduced models} \label{f}
In this section, we provide estimates for the difference between the solution of the original model and the solution of the Reduced Model II, 
when the fracture domain $\Omega_f$ is considered to be a foliation of a cylindrical surface in $\mathbb{R}^3$, namely
$$\mathbf{r}(u,v) = \left \langle u,v,f(u)\right \rangle, \mbox{ and }  h(u,v) = h(u).$$
We first investigate the difference between the solutions only inside the fracture with given fluxes, Theorem \ref{Tdifference}, and then we investigate the difference between 
the solutions in the coupled domain, Theorem \ref{TdifferenceCoupled}.

\begin{proposition}\label{red_prop_g2}
Let all the conditions of proposition \ref{red_Thm}. Then the equation for pressure inside the fracture as a foliation of a cylindrical surface in $\mathbb{R}^3$is given by
\begin{align} 
- \partial_u \left( \frac{2h(u)}{\sqrt{ 1+f_u^2}}\,f_\beta \left(\|\nabla_g W\|_g \right) \, \partial_u W\right)=2\frac{Q}{\left|\Omega\right|} h(u)\sqrt{ 1+f_u^2}  + \tilde{q}^+(u) + \tilde{q}^-(u)\,
, \label{red_eq_g2}
\end{align}
where 
$$
\tilde{q}^\pm (u) =q^\pm \sqrt{1+h_u^2}\sqrt{1+f_u^2} ,\,\,\,
\|\nabla_g W \|_g = \frac{1}{\sqrt{1+f_u^2}}\partial_u W \,.
$$
\end{proposition}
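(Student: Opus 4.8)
The plan is to treat this proposition as a direct specialization of Reduced Model II, Eq.~\eqref{FINAL2}, to the cylindrical geometry $\mathbf{r}(u,v)=\langle u,v,f(u)\rangle$, $h(u,v)=h(u)$, so that the proof reduces to computing the relevant metric quantities and substituting. First I would compute the first fundamental form of $\mathbf{r}$: from $\mathbf{r}_u=\langle 1,0,f_u\rangle$ and $\mathbf{r}_v=\langle 0,1,0\rangle$ one obtains $g_{11}=1+f_u^2$, $g_{12}=g_{21}=0$, $g_{22}=1$, hence $|g|=1+f_u^2$ and $\sqrt{|g|}=\sqrt{1+f_u^2}$. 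Inverting this diagonal metric gives $g^{11}=1/(1+f_u^2)$, $g^{12}=g^{21}=0$, $g^{22}=1$. The key combination that will produce the coefficient in \eqref{red_eq_g2} is $\sqrt{|g|}\,g^{11}=1/\sqrt{1+f_u^2}$.

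Next I would invoke the standing assumption of no flow in the $v$-direction, which I read as $\partial_v W=0$ (consistent with the cylindrical symmetry, since both $\mathbf{r}$ and $h$ are independent of $v$). Because the metric is diagonal ($g^{12}=0$) and $\partial_v W=0$, the double sum $\sum_{i,j}\partial_i(\,\cdots g^{ij}\partial_j W)$ in \eqref{FINAL2} collapses to the single $i=j=1$ term. Simultaneously, from \eqref{grad_norm} I get $\|\nabla_g W\|_g^2=g^{11}(\partial_u W)^2=(\partial_u W)^2/(1+f_u^2)$, which yields $\|\nabla_g W\|_g=\partial_u W/\sqrt{1+f_u^2}$ as claimed.

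Substituting $\sqrt{|g|}\,g^{11}=1/\sqrt{1+f_u^2}$ into the surviving term turns the left side of \eqref{FINAL2} into $-\partial_u\big(\tfrac{2h}{\sqrt{1+f_u^2}}\,f_\beta\,\partial_u W\big)$, while substituting $\sqrt{|g|}=\sqrt{1+f_u^2}$ into the source term gives $2\tfrac{Q}{|\Omega|}h\sqrt{1+f_u^2}$. Finally I would specialize the flux terms $\tilde{q}^\pm=q^\pm\sqrt{1+h_u^2+h_v^2}\sqrt{|g|}$ using $h_v=0$ and $\sqrt{|g|}=\sqrt{1+f_u^2}$, recovering $\tilde{q}^\pm(u)=q^\pm\sqrt{1+h_u^2}\sqrt{1+f_u^2}$. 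Assembling these pieces gives exactly \eqref{red_eq_g2}.

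There is no genuine analytic obstacle here; the statement is an identity obtained by specialization of an already-established reduced model. The only point requiring care is the explicit use of the no-flow-in-$v$ hypothesis: it is what eliminates the $i=j=2$ term $\partial_v\big(h f_\beta\sqrt{|g|}\,g^{22}\partial_v W\big)$ from \eqref{FINAL2} and leaves an ODE in $u$. I would therefore state $\partial_v W=0$ explicitly and note that the vanishing of the off-diagonal $g^{12}$ is what guarantees that no residual cross-terms survive, so that the reduction to a single $u$-derivative is exact rather than approximate.
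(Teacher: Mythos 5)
Your proposal is correct and follows essentially the same route as the paper: the paper's own proof simply computes the induced metric for $\mathbf{r}(u,v)=\langle u,v,f(u)\rangle$, observes that $G^{-1}$ is diagonal with entries $1/(1+f_u^2)$, $1$, $1$, and then cites Proposition \ref{red_Thm}. You spell out the intermediate steps the paper leaves implicit (the collapse of the double sum via $\partial_v W=0$ and $g^{12}=0$, the identity $\sqrt{|g|}\,g^{11}=1/\sqrt{1+f_u^2}$, and the specialization of $\tilde q^\pm$), which is a more complete but not different argument.
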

\begin{proof}
For a fracture with $z(u,v)=f(u)$ and thickness $2h(u)$, the barycentric surface of the fracture is given by $\mathbf{r}(u,v) = \left \langle u,v,f(u)\right \rangle$ and the inverse of the induced metric associated to $\mathbf{R}(u,v,\lambda)$ is given by
\begin{align}
G^{-1}= 
\begin{bmatrix}
\dfrac{1}{1+f_u^2} & 0 & 0\\
0 & 1 & 0\\
0 & 0 & 1
\end{bmatrix}.
\end{align}
Then the result follows from proposition \ref{red_Thm}.
\end{proof}

The analysis will be based on the following results.

 \begin{theorem} \label{IBP} (See Ref.\cite{do2012differential})\\
Let $M$ be a compact oriented Riemannian manifold of dimension  $n$ with boundary $\partial M$.
Then for all the vector fields $X$ and smooth functions $f$, the integration by parts formula on the manifold is given by
\begin{equation} 
\int_M f \rm{div} X \, d\mu
 =- \int_M \langle \nabla_g f, X\rangle_g d \mu 
  + \int_{\partial M}f\cdot \langle X,\mathbf{n} \rangle_g d \tilde{\mu}\,.
\end{equation}
\end{theorem}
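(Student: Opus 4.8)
The statement is the classical Riemannian integration-by-parts (Green's first) identity, so my plan is to reduce it to two standard facts: the divergence theorem on $(M,g)$ and the Leibniz rule for the divergence operator. First I would record the divergence theorem on a compact oriented Riemannian manifold with boundary: for every smooth vector field $Y$,
\begin{equation*}
\int_M \mathrm{div}(Y)\, d\mu = \int_{\partial M}\langle Y,\mathbf{n}\rangle_g\, d\tilde\mu .
\end{equation*}
This is Stokes' theorem applied to the $(n-1)$-form $\iota_Y d\mu$ obtained by contracting the volume form with $Y$, using $d(\iota_Y d\mu) = (\mathrm{div}\,Y)\,d\mu$ together with the fact that the pullback of $\iota_Y d\mu$ to $\partial M$ equals $\langle Y,\mathbf{n}\rangle_g\, d\tilde\mu$, where $\mathbf{n}$ is the outward unit normal and $d\tilde\mu$ the induced boundary volume form.

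Next I would apply the Leibniz rule for the divergence. Writing the divergence in the local expression \eqref{div}, namely $\mathrm{div}(Y) = \frac{1}{\sqrt{|g|}}\partial_i(\sqrt{|g|}\,Y^i)$, a direct expansion gives, for a smooth scalar $f$ and a vector field $X$,
\begin{equation*}
\mathrm{div}(fX) = f\,\mathrm{div}(X) + \langle \nabla_g f, X\rangle_g ,
\end{equation*}
where the last term pairs the gradient $\nabla_g f$ of \eqref{grad} with $X$ through the metric. I would then set $Y = fX$ in the divergence theorem, so that the boundary integrand becomes $f\langle X,\mathbf{n}\rangle_g$, expand the interior integrand by the Leibniz rule, and rearrange to isolate $\int_M f\,\mathrm{div}(X)\,d\mu$; this produces exactly the stated identity.

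The only genuinely delicate point is the divergence theorem itself — in particular, obtaining the boundary term with the correct induced volume form $d\tilde\mu$ and the outward unit normal, which requires the usual bookkeeping with orientations and the interior product against $d\mu$. Everything else is a one-line algebraic rearrangement. Since the identity is classical and is cited here from \cite{do2012differential}, this reduction to Stokes' theorem and the Leibniz rule is the intended and shortest route, and I do not anticipate any further obstacle.
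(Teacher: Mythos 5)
Your argument is correct and is the standard textbook derivation: apply the divergence theorem to $Y=fX$ and expand $\mathrm{div}(fX)=f\,\mathrm{div}(X)+\langle\nabla_g f,X\rangle_g$, which, after rearranging, yields the stated identity with the correct signs. The paper itself gives no proof of this theorem --- it is quoted directly from the cited reference --- so there is nothing to compare against beyond noting that your route (Stokes' theorem on $\iota_Y d\mu$ plus the Leibniz rule) is precisely the classical argument the reference uses, and it is complete as written.
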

\begin{definition}
On the manifold $M$, $L^p$ norm is defined as the following  .
$$\|F\|_{L^p(M)}=\Big(\int_M \left|F\right|^p \sqrt{|g|} dudvd\lambda \Big)^{1/p} $$
\end{definition} 

\begin{lemma} \label{lidia_n}
For $f_{\beta}(\|\nabla_g W \|_g)$ defined by Eq.~\eqref{non lin term}, $1 \leq q < 2$,
\begin{align}
&\int_\Omega \Big( f_\beta \left(\|\nabla_g W _1\|_g \right) \nabla_g W_1-  f_\beta \left(\|\nabla_g W _2\|_g \right) \nabla_g  W_2 \Big) 
\cdot \nabla_g \left(W_1- W_2\right)   d\Omega
 \nonumber \\
 &  \geq C \left\|\nabla_g \left(W_1- W_2\right) \right\|_{L^q} ^2
 \Big \{1+ \max 
 \left( \|\nabla_g W_1\|_{L^{\frac{q}{2(2-q)}}}, \|\nabla_g W_2\|_{L^{\frac{q}{2(2-q)}}}\right)\Big \}^{-1/2}\,.
\end{align}
\end{lemma}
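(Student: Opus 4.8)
The plan is to upgrade the pointwise, \emph{degenerate} monotonicity of Lemma~\ref{NSM} into a genuine $L^q$ coercivity estimate by a single weighted H\"older inequality. Throughout, write $\xi_i=\nabla_g W_i$ and $M=\max(\|\xi_1\|_g,\|\xi_2\|_g)$, and recall that the integral in the statement pairs the fluxes through the metric inner product $\langle\cdot,\cdot\rangle_g$ (this is the pairing produced by the integration-by-parts formula of Theorem~\ref{IBP}), against the measure $d\mu=\sqrt{|g|}\,du\,dv\,d\lambda$ used in the $L^p$ norms. First I would record that Lemma~\ref{NSM} is really a statement about an arbitrary inner-product space: setting $\zeta_i=G^{1/2}\xi_i$ one has $\|\xi_i\|_g=|\zeta_i|$ and $\langle a,b\rangle_g=\langle G^{1/2}a,G^{1/2}b\rangle$, so the integrand equals the Euclidean expression $\big(f_\beta(|\zeta_1|)\zeta_1-f_\beta(|\zeta_2|)\zeta_2\big)\cdot(\zeta_1-\zeta_2)$, to which Lemma~\ref{NSM} applies verbatim. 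This gives, pointwise on $\Omega$,
\begin{equation*}
\Big(f_\beta(\|\xi_1\|_g)\xi_1-f_\beta(\|\xi_2\|_g)\xi_2\Big)\cdot(\xi_1-\xi_2)\;\ge\;\tfrac12\,f_\beta(M)\,\|\xi_1-\xi_2\|_g^2 .
\end{equation*}

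The second ingredient is the elementary lower bound $f_\beta(t)\ge C_0(1+t)^{-1/2}$ for all $t\ge0$, which follows directly from the explicit formula~\eqref{non lin term}: the quotient $(1+t)^{1/2}f_\beta(t)$ is continuous and positive, equals $\alpha^{-1}$ at $t=0$, and tends to the positive constant $\beta^{-1/2}$ as $t\to\infty$, hence is bounded below. Integrating the pointwise inequality against $d\mu$ and inserting this bound reduces the entire lemma to the weighted estimate
\begin{equation*}
\int_\Omega (1+M)^{-1/2}\|\xi_1-\xi_2\|_g^2\,d\mu\;\ge\;C\,\|\xi_1-\xi_2\|_{L^q}^2\,\big\{1+\max(\|\xi_1\|_{L^s},\|\xi_2\|_{L^s})\big\}^{-1/2},\qquad s=\tfrac{q}{2(2-q)}.
\end{equation*}

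The heart of the argument, and the step I expect to be the main point to get exactly right, is a single application of H\"older's inequality with conjugate exponents $r=2/q$ and $r'=2/(2-q)$. I would split the integrand of $\|\xi_1-\xi_2\|_{L^q}^q$ as $\|\xi_1-\xi_2\|_g^q=\big[(1+M)^{-1/2}\|\xi_1-\xi_2\|_g^2\big]^{q/2}\,(1+M)^{q/4}$, so that H\"older gives
\begin{equation*}
\|\xi_1-\xi_2\|_{L^q}^q\;\le\;\Big(\int_\Omega (1+M)^{-1/2}\|\xi_1-\xi_2\|_g^2\,d\mu\Big)^{q/2}\Big(\int_\Omega (1+M)^{s}\,d\mu\Big)^{(2-q)/2}.
\end{equation*}
The exponents are arranged precisely so that, after raising both sides to the power $2/q$, the second factor reassembles into $\|1+M\|_{L^s}^{1/2}$ (using $s\cdot\tfrac{2-q}{q}=\tfrac12$); rearranging then produces the displayed reduction with the weight $\|1+M\|_{L^s}^{-1/2}$. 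This bookkeeping, making the weight collapse into exactly the $L^{q/(2(2-q))}$ norm demanded by the statement, is the one place where an error in the exponents would be fatal.

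Finally I would dispose of the weight norm by bounding $\|1+M\|_{L^s}\le C\{1+\max(\|\xi_1\|_{L^s},\|\xi_2\|_{L^s})\}$. Since $\Omega$ is bounded, $\|1\|_{L^s}=|\Omega|^{1/s}<\infty$, and $M\le\|\xi_1\|_g+\|\xi_2\|_g$ pointwise; the bound then follows from the triangle inequality in $L^s$ when $s\ge1$, and from the subadditivity $(a+b)^s\le a^s+b^s$ combined with convexity of $t\mapsto t^{1/s}$ when $s<1$ (which occurs exactly for $q<4/3$). Chaining the three reductions yields the asserted inequality with a constant $C$ depending only on $\alpha$, $\beta$, $q$ and $|\Omega|$. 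Apart from the transfer of Lemma~\ref{NSM} to the $g$-inner product and the small-$s$ quasi-norm bookkeeping, the remaining manipulations are the standard device for turning degenerate monotonicity of a $p$-Laplacian-type operator (here with effective exponent $p=\tfrac32$, consistent with the regularity class $\mathbb{W}^{1,3/2}$ noted earlier) into honest $L^q$ coercivity.
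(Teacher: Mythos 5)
Your proposal is correct, and it is essentially the argument the paper intends: the paper gives no proof of Lemma \ref{lidia_n} itself but delegates to Lemma III.11 of the cited reference ``with $a=1/2$'', whose standard proof is exactly your three-step chain --- pointwise degenerate monotonicity from Lemma \ref{NSM}, the lower bound $f_\beta(t)\ge C_0(1+t)^{-1/2}$, and the weighted H\"older inequality with exponents $2/q$ and $2/(2-q)$ collapsing the weight into the $L^{q/(2(2-q))}$ norm. Your only additions beyond the reference's $\mathbb{R}^n$ argument are the (correct) transfer of Lemma \ref{NSM} to the $g$-inner product via $\zeta_i=G^{1/2}\xi_i$ and the quasi-norm bookkeeping for $s<1$, both of which check out.
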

The proof of the above lemma can be obtained using the same arguments as for the case of $\mathbb{R}^n$ in  Lemma III.11 in Ref \cite{aulisa2009analysis} with $a=1/2$.

\begin{lemma} \label{boundness_n}
There exists a constant $C$ depending on $\Omega$, $Q$, $q^+(u)$ and $q^-(u)$ such that the corresponding basic profiles $W$ and $\bar W$ satisfy
$$\|\nabla_g W\|_{L^\frac{3}{2}(\Omega)} \leq C, \qquad \|\nabla_g \bar W\|_{L^\frac{3}{2}(\Omega)} \leq C.$$
\end{lemma}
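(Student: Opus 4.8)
The plan is to derive an a priori energy estimate by testing each defining equation against its own solution. I describe the argument for $W$; the bound for $\bar W$ follows by the identical computation, since both are basic profiles solving a Forchheimer-type boundary value problem (the original model \eqref{actual1}, respectively the reduced model \eqref{red_eq_g2}) driven by the same source $Q/|\Omega|$ and the same prescribed fluxes $q^{\pm}$. First I would write the weak form of the equation, multiply by $W$, and integrate over $\Omega$. Applying the integration-by-parts formula of Theorem \ref{IBP} transfers one derivative onto $W$ and generates boundary integrals over $\partial\Omega=\Gamma_w\cup\Gamma_f^{\pm}\cup\Gamma_{f_{out}}\cup\Gamma_{out}^{\pm}$. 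The Dirichlet condition \eqref{well} annihilates the contribution on $\Gamma_w$, the no-flux conditions \eqref{noFluxBdry} annihilate those on $\Gamma_{f_{out}}$ and $\Gamma_{out}^{\pm}$, and the surviving terms on $\Gamma_f^{\pm}$ are exactly the given fluxes. This yields the energy identity
\[
\int_\Omega f_\beta(\|\nabla_g W\|_g)\,\|\nabla_g W\|_g^2\, d\mu \;=\; \frac{Q}{|\Omega|}\int_\Omega W\, d\mu \;+\; \int_{\Gamma_f^{+}\cup\Gamma_f^{-}}(q^{+}+q^{-})\,W\, d\tilde\mu .
\]

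The key step is the lower bound on the left. Since $f_\beta(\zeta)=2/(\alpha+\sqrt{\alpha^2+4\beta\zeta})$ behaves like $(\beta\zeta)^{-1/2}$ as $\zeta\to\infty$, a short estimate using $\sqrt{\alpha^2+4\beta\zeta}\le \alpha+2\sqrt{\beta\zeta}$ gives $f_\beta(\zeta)\,\zeta^2\ge \zeta^{3/2}/(2\sqrt\beta)$ for $\zeta$ large and a bounded quantity for $\zeta$ small, hence
\[
f_\beta(\zeta)\,\zeta^2 \;\ge\; c_1\,\zeta^{3/2}-c_2 .
\]
Integrating, the energy controls $\|\nabla_g W\|_{L^{3/2}(\Omega)}^{3/2}$ up to an additive constant; this is precisely where the exponent $3/2$ enters, matching the weak-solution space $\mathbb{W}^{1,3/2}(\Omega)$ recorded earlier in the paper.

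Next I would bound the right-hand side. Because $W$ vanishes on $\Gamma_w$, the Poincaré inequality together with the Sobolev embedding $\mathbb{W}^{1,3/2}(\Omega)\hookrightarrow L^{3}(\Omega)$ (valid in dimension three) gives $\|W\|_{L^3}\le C\|\nabla_g W\|_{L^{3/2}}$, so the volume source term is at most $C\|\nabla_g W\|_{L^{3/2}}$. For the boundary flux term, the trace theorem for $\mathbb{W}^{1,3/2}(\Omega)$ (whose boundary trace embeds into $L^2(\partial\Omega)$ on the $2$-dimensional boundary) combined with the integrability of $q^{\pm}$ gives a bound $C\|q^{+}+q^{-}\|_{L^2}\,\|\nabla_g W\|_{L^{3/2}}$. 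Hence the whole right side is bounded by $C(1+\|\nabla_g W\|_{L^{3/2}})$, with $C$ depending only on $\Omega,Q,q^{\pm}$. Writing $x=\|\nabla_g W\|_{L^{3/2}(\Omega)}$ and combining the two sides yields
\[
c_1\,x^{3/2}\;\le\; c_2 + C(1+x),
\]
and since the exponent $3/2>1$ makes the left-hand side superlinear, $x$ is forced to be bounded by a constant depending only on the data. Repeating the argument verbatim for $\bar W$ gives the second bound.

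The main obstacle is not the absorption step, which is routine, but the two structural inputs feeding it: verifying the coercivity inequality $f_\beta(\zeta)\,\zeta^2\ge c_1\zeta^{3/2}-c_2$ quantitatively, and establishing the Sobolev and trace embeddings on the compact manifold $(\Omega,g)$ with constants depending only on the geometry and not on the solution. The latter is the technical heart: one must control the metric-weighted measure $\sqrt{|g|}\,du\,dv\,d\lambda$ uniformly, which is possible because $|g|$ and $|g|^{-1}$ are bounded on the compact domain, so the manifold norms are equivalent to the flat ones and the Euclidean embedding constants transfer with a geometry-dependent factor.
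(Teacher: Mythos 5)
Your proposal is correct and follows essentially the same route as the paper, which gives no in-text argument but delegates to Theorem V.4 of the cited reference \cite{aulisa2009analysis} with $a=1/2$ --- that theorem is precisely the a priori energy estimate obtained by testing the equation with its own solution, using the growth property $f_\beta(\zeta)\zeta^2\gtrsim\zeta^{3/2}$ (the source of the exponent $\frac{3}{2}=2-a$) and Sobolev--Poincar\'e--trace inequalities, then absorbing the superlinear left-hand side. The only cosmetic difference is that for $\bar W$ the fluxes $q^\pm$ enter as the volume source $\frac{1}{2h}(q^++q^-)$ rather than through boundary integrals, but the estimate proceeds identically.
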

The proof of the above lemma can be obtained using the same arguments as for the case of $\mathbb{R}^n$ in Theorem V.4 in Ref \cite{aulisa2009analysis} with $a=1/2$.  

\subsection{Estimates for the difference between the solutions inside the fracture $\Omega_f$.} 

\begin{figure}[H]
\begin{center}
\frame{\includegraphics[scale=0.4]{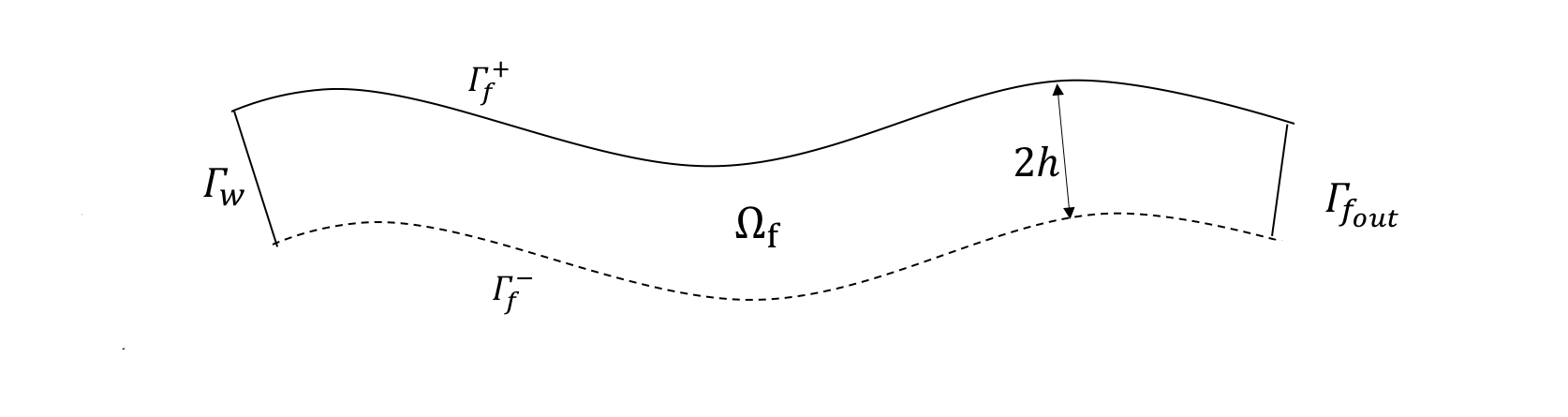}} 
\caption{Fracture domain $\Omega_f$.} 
\end{center} 
\end{figure}

We investigate the difference between the solutions of the actual model \eqref{prob_1} and the reduced model \eqref{prob_2} on the 2-D manifold as defined below. Let $W$ be the solution of the actual model and $\bar W$ be the solution of the reduced model. Here $\nabla_g = <\frac{1}{1+f_u^2}\partial_u,\partial_{\lambda}>$ and $\nabla_{g_u} = \frac{1}{1+f_u^2}\partial_u.$
\begin{enumerate}[(i)]
\item \label{prob_1} \textbf{Actual model}: 
The flow equation  is given by
\begin{equation}
-\nabla_g \cdot f_\beta(\|\nabla_g W\|_g)\nabla_g W= \frac{Q}{|\Omega|} \,\, \mbox{ in } \Omega_f  \,, \label{new1Ln}
\end{equation}
with the boundary conditions
\begin{align}
\left<f_\beta (\|\nabla_g W\|_g)\nabla_g W , \mathbf{n}_f^\pm\right>_g &= q^\pm(u), &&\mbox{ on } \Gamma_f^\pm, \label{bc1Ln} \\
W  &=0, &&\mbox{ on } \Gamma_w,\label{bc3Ln}\\
\left<f_\beta (\|\nabla_g W\|_g)\nabla_gW,\mathbf{n}\right>_g&=0
, &&\mbox{ on } \Gamma_{f_{out}}\label{bc4Ln}\,.
\end{align}
\item \label{prob_2}

Since the solution of the reduced problem is $\lambda$ independent, for comparison of the actual problem and the reduced one, we state the 1-D reduced problem as a 2-D one in the same domain $\Omega_f$.

\noindent \textbf{Reduced model}: The flow equation is given by
\begin{equation}
-\nabla_g \cdot f_\beta(\|\nabla_g \bar W\|_g)\nabla_g \bar W  
=\frac{Q}{\left|\Omega\right|} + \frac{1}{2h} \left(q^+(u)+q^-(u) \right)  \mbox{ in } \Omega_f \,, \label{new2Ln}
\end{equation}
with boundary conditions
\begin{align}
\left<f_\beta (\|\nabla_g \bar W\|_g)\nabla_g \bar W , \mathbf{n}_f^\pm\right>_g &= 0,& &\mbox{ on } \Gamma_f^+ \cup \Gamma_f^- \,,\label{bc1aLn}\\
\bar W  &=0 ,& &\mbox{ on } \Gamma_w,\label{bc2aLn}\\
\left<f_\beta (\|\nabla_g \bar W\|_g)\nabla_g \bar W , \mathbf{n}\right>_g &=0,& &\mbox{ on } \Gamma_{f_{out}}\label{bc3aLn} \,.
\end{align}
\end{enumerate}

\begin{theorem}\label{Tdifference}
Let $W$ and $\bar W$ be the solutions of B.V.P.s \eqref{prob_1} and \eqref{prob_2} respectively. Then,
\begin{align}
\left\|\nabla_{g_u} (W-\bar W)\right\|_{L^\frac{3}{2}(\Omega_f)}^{2} + 
 \left\|W_{\lambda}\right\|_{L^\frac{3}{2}(\Omega_f)}^{2}
\leq C \left(\|q^+ \|^2_{L^3(\Omega_f)} +\|q^-\|^2_{L^3(\Omega_f)}\right)\,,
\end{align} for some constant $C$.
 \end{theorem}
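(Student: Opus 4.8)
The plan is to test the difference of the two equations against $\varphi := W - \bar W$ and to exploit the monotonicity of the Forchheimer operator. First I would subtract \eqref{new2Ln} from \eqref{new1Ln}, obtaining $-\nabla_g\cdot\big(f_\beta(\|\nabla_g W\|_g)\nabla_g W - f_\beta(\|\nabla_g\bar W\|_g)\nabla_g\bar W\bigr) = -\frac{1}{2h}(q^+ + q^-)$ in $\Omega_f$. Since $\varphi$ vanishes on $\Gamma_w$ by \eqref{bc3Ln} and \eqref{bc2aLn}, I would multiply by $\varphi$, integrate over $\Omega_f$, and apply the integration-by-parts formula of Theorem \ref{IBP}. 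The interior divergence term moves onto $\nabla_g\varphi$, producing the monotone bilinear form, while the boundary integral survives only on $\Gamma_f^\pm$: it vanishes on $\Gamma_w$ because $\varphi=0$, and on $\Gamma_{f_{out}}$ because both fluxes vanish there by \eqref{bc4Ln} and \eqref{bc3aLn}. Using the flux conditions \eqref{bc1Ln} and \eqref{bc1aLn}, the boundary contribution reduces to $\int_{\Gamma_f^+}\varphi\, q^+ + \int_{\Gamma_f^-}\varphi\, q^-$, yielding the identity $\int_{\Omega_f}\langle f_\beta(\|\nabla_g W\|_g)\nabla_g W - f_\beta(\|\nabla_g\bar W\|_g)\nabla_g\bar W,\ \nabla_g\varphi\rangle_g = \int_{\Gamma_f^+}\varphi\, q^+ + \int_{\Gamma_f^-}\varphi\, q^- - \int_{\Omega_f}\frac{1}{2h}(q^++q^-)\varphi$.

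For the left-hand side I would invoke the monotonicity estimate of Lemma \ref{lidia_n} with $q=3/2$; note that $\frac{q}{2(2-q)}=3/2$ for this choice, so the weight $\{1 + \max(\|\nabla_g W\|_{L^{3/2}},\|\nabla_g\bar W\|_{L^{3/2}})\}^{-1/2}$ is bounded below by a positive constant by the uniform a priori bounds of Lemma \ref{boundness_n}. Hence the left side dominates $C\|\nabla_g\varphi\|_{L^{3/2}(\Omega_f)}^2$. Since $\bar W$ is independent of $\lambda$, we have $\partial_\lambda\varphi = W_\lambda$, so by the decomposition $\nabla_g = \langle\frac{1}{1+f_u^2}\partial_u,\ \partial_\lambda\rangle$ this quantity controls exactly $\|\nabla_{g_u}(W-\bar W)\|_{L^{3/2}}^2 + \|W_\lambda\|_{L^{3/2}}^2$, the left side of the theorem.

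It then remains to bound the right-hand side by $C(\|q^+\|_{L^3}^2 + \|q^-\|_{L^3}^2)$ after absorbing a small multiple of $\|\nabla_g\varphi\|_{L^{3/2}}^2$. For the boundary terms I would use the critical trace embedding $W^{1,3/2}(\Omega_f)\hookrightarrow L^3(\Gamma_f^\pm)$ together with the Poincaré inequality $\|\varphi\|_{W^{1,3/2}}\leq C\|\nabla_g\varphi\|_{L^{3/2}}$ (valid because $\varphi|_{\Gamma_w}=0$); pairing $\|\varphi\|_{L^{3/2}(\Gamma_f^\pm)}$ against $\|q^\pm\|_{L^3(\Gamma_f^\pm)}$ by Hölder with $\frac23+\frac13=1$, then Young's inequality, gives $C\|q^\pm\|_{L^3}^2$ plus an absorbable piece. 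The interior term is handled by Hölder in $\Omega_f$ (again with $\frac13+\frac23=1$) via $\|\varphi\|_{L^{3/2}(\Omega_f)}\leq C\|\nabla_g\varphi\|_{L^{3/2}}$ and boundedness of $1/h$, followed by Young. Collecting terms, choosing the Young parameter small enough to absorb $\|\nabla_g\varphi\|_{L^{3/2}}^2$ into the coercive lower bound, and dividing out one power of $\|\nabla_g\varphi\|_{L^{3/2}}$ yields the claim.

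The main obstacle is the coercivity step: Lemma \ref{lidia_n} furnishes only a degenerate, solution-dependent lower bound, so the argument hinges on converting it into a genuine $\|\nabla_g\varphi\|_{L^{3/2}}^2$ estimate through the uniform bounds of Lemma \ref{boundness_n}. The second delicate point is that the trace and Sobolev exponents must land exactly at $L^3$ on the boundary data — this borderline trace embedding for $p=3/2$ in the planar domain is precisely what forces the $L^3$ norms of $q^\pm$ on the right — and one must check that $q^\pm$ (which depend on $u$ alone) have comparable $L^3$ norms over $\Gamma_f^\pm$ and over $\Omega_f$, with $h$ bounded away from zero so that the $1/(2h)$ weights are harmless.
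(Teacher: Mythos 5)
Your proposal follows the paper's skeleton up to the energy identity and the coercivity step (same weak formulation tested with $W-\bar W$, same use of Lemmas \ref{lidia_n} and \ref{boundness_n} with $q=3/2$ to get the lower bound $C_0\|\nabla_g z\|_{L^{3/2}}^2$), but you then diverge in the one place where the paper's argument has its real content. You estimate the interior source term $I_2=-\int_{\Omega_f}\frac{1}{2h}(q^++q^-)z\,d\mu$ and the boundary term $I_3=\int_{\Gamma_f^\pm}q^\pm z$ \emph{separately}, via a trace embedding and a Poincar\'e inequality anchored at $\Gamma_w$. The paper instead combines them: since $\int_{-h}^{h}\frac{1}{2h}\,d\lambda=1$, the sum collapses to $\int_{\Gamma_f^\pm}q^\pm\int_{-h}^{h}\frac{1}{2h}\left(z\rvert_{\Gamma_f^\pm}-z\right)d\lambda\,d\tilde\mu$, and because $\bar W$ is $\lambda$-independent this difference equals $W\rvert_{\Gamma_f^\pm}-W$, which a one-dimensional Poincar\'e inequality \emph{in the transverse direction} bounds by $W_\lambda$ alone, with a Poincar\'e constant of order $h$ that exactly cancels the $1/(2h)$ weight. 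Two consequences: the paper's final constant $C$ is independent of $h$, and the absorbed term eats only into the $\|W_\lambda\|^2_{L^{3/2}}$ part of the coercive lower bound rather than the full gradient. Your route, by contrast, inherits $h$-dependent trace and Poincar\'e constants and, as you yourself concede at the end, needs $h$ bounded away from zero and a comparison of $\|q^\pm\|_{L^3}$ over $\Gamma_f^\pm$ versus over $\Omega_f$; this still proves the theorem as literally stated (where $h$ is fixed and $C$ is unquantified), but it loses the cancellation that makes the estimate meaningful for thin fractures and that is reused in Theorem \ref{TdifferenceCoupled} to extract the $h^{8/3}$ rate. If you rewrite the right-hand side as the single averaged difference before estimating, both of your flagged ``delicate points'' (the borderline trace embedding and the $1/(2h)$ weight) disappear.
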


 \begin{proof}
Subtracting Eq.~\eqref{new2Ln} from Eq.~\eqref{new1Ln}, multiplying by $z=z(u,\lambda)=W- \bar W$ and integrating over the volume of the fracture we obtain
\begin{align}
&\iint_{\Omega_f}
-\nabla_g \cdot \Big( f_\beta(\|\nabla_g W\|_g)\nabla_g W - f_\beta(\|\nabla_g \bar W\|_g)\nabla_g \bar W \Big)\,z\, d\mu \nonumber \\
&\qquad \qquad \qquad = \iint_{\Omega_f} -\frac{1}{2h} \left(q^++q^- \right)\,z\, d\mu \,.\label{VolInta}
\end{align}
Using Green's formula on Riemannian manifolds (See Ref \cite{do2012differential}) and due to the boundary conditions (\ref{bc3Ln}), (\ref{bc4Ln}), (\ref{bc2aLn}) and (\ref{bc3aLn}), the left hand side of the above equation can be rewritten as
\begin{align}
&\iint_{\Omega_f} \Big\langle\left( f_{\beta}\left( \|\nabla_g W\|_g \right)\nabla_g W - f_{\beta}( \|\nabla_g \bar W\|_g )\nabla_g \bar W \right), \nabla_g z \Big\rangle_g \, d\mu \nonumber \\
&\quad -\int_{\Gamma_f^+} \Big\langle\left( f_{\beta}( \|\nabla_g W\|_g )\nabla_g W - f_{\beta}( \|\nabla_g \bar W\|_g )\nabla_g \bar W \right), \mathbf{n}^+_f \Big\rangle_g \, z\big \rvert_{\Gamma_f^+}\, d\tilde{\mu} \nonumber \\
&\quad -\int_{\Gamma_f^-} \Big\langle\left( f_{\beta}( \|\nabla_g W\|_g )\nabla_g W - f_{\beta}( \|\nabla_g \bar W\|_g )\nabla_g \bar{W} \right), \mathbf{n}^-_f \Big\rangle_g \, z\big \rvert_{\Gamma_f^-}\, d\tilde{\mu} 
\,.
\end{align}
In the previous equations, as well as in the rest of the paragraph, $d\mu$ is the Riemannian volume element of the manifold $M$ and $d\tilde{\mu}$ is the area element of the boundary of the manifold $M$. 
Eq.~\eqref{VolInta} is equivalent to 
\begin{align}
I_1=I_2+I_3  \label{I1,2,3}\,.
\end{align}
where,
\begin{align}
I_1&=\iint_{\Omega_f} \Big\langle\left( f_{\beta}\left( \|\nabla_g W\|_g \right)\nabla_g W - f_{\beta}( \|\nabla_g \bar W\|_g )\nabla_g \bar{W} \right), \nabla_g z \Big\rangle_g \, d\mu \label{I_1} \,,\\
I_2&=-\iint_{\Omega_f} \frac{1}{2h} \left(q^+(u)+q^-(u) \right)\,z\, d\mu  \,, \nonumber \\
I_3 &=\int_{\Gamma_f^+} \Big\langle\left( f_{\beta}( \|\nabla_g W\|_g )\nabla_g W - f_{\beta}( \|\nabla_g \bar W\|_g )\nabla_g \bar{W} \right), \mathbf{n}^+_f \Big\rangle_g \, z\big \rvert_{\Gamma_f^+}\, d\tilde{\mu} \nonumber \\
&\quad +\int_{\Gamma_f^-} \Big\langle\left( f_{\beta}( \|\nabla_g W\|_g )\nabla_g W - f_{\beta}( \|\nabla_g \bar W\|_g )\nabla_g \bar{W} \right), \mathbf{n}^-_f \Big\rangle_g \, z\big \rvert_{\Gamma_f^-}\, d\tilde{\mu} \,.
\end{align}
Consider $I_1$.
By lemma \ref{boundness_n} and \ref{lidia_n} with $q=3/2$, there exist positive constant $C_0$ such that,
 \begin{align}
I_1 &\geq C_0 
 \left\|\nabla_g z \right\|_{L^\frac{3}{2}}^{2} = 
 C_0  \left( 
 \left\|\nabla_{g_u} z\right\|_{L^\frac{3}{2}}^{2} + 
 \left\|W_{\lambda}\right\|_{L^\frac{3}{2}}^{2}
  \right)\,.
 \label{I_1newn}
\end{align}
Now consider $I_3$.
Due to the boundary conditions \eqref{bc1Ln} and since $\bar W$ is $\lambda$ independent, we have
\begin{align}
I_3= \int_{\Gamma_f^+} q^+(u) \, W\big \rvert_{\Gamma_f^+}\, d\tilde{\mu} +\int_{\Gamma_f^-}q^-(u) W\big \rvert_{\Gamma_f^-}\, d\tilde{\mu} \,.
\end{align}
It then follows that
\begin{align}
I_2+I_3=\int_{\Gamma_f^+} q^+(u) \int_{-h}^{h}\frac{\left( W\rvert_{\Gamma_f^+} - W\right)}{2h}\,\,d\lambda\,d\tilde \mu +
\int_{\Gamma_f^-} q^-(u) \int_{-h}^{h}\frac{\left( W\rvert_{\Gamma_f^-} - W\right)}{2h}\,\,d\lambda\,d\tilde \mu
 \,. \label{I2+3Ln}
\end{align}
Using H\"older and Cauchy inequalities we obtain
\begin{align}
| I_2+I_3| &\le
\left( \int_{\Gamma_f^+} \int_{-h}^{h} \left|q^+\right|^3   d\lambda  d\tilde\mu \right)^\frac{1}{3}
\left(\int_{\Gamma_f^+} \int_{-h}^{h} \left| \frac{W\rvert_{\Gamma_f^+} - W}{2h} \right|^\frac{3}{2} d\lambda  d\tilde\mu\right)^\frac{2}{3}\nonumber \\
&+
\left( \int_{\Gamma_f^-} \int_{-h}^{h} \left|q^-\right|^3   d\lambda  d\tilde\mu \right)^\frac{1}{3}
\left(\int_{\Gamma_f^-} \int_{-h}^{h} \left| \frac{W\rvert_{\Gamma_f^-} - W}{2h} \right|^\frac{3}{2} d\lambda  d\tilde\mu\right)^\frac{2}{3}\nonumber \\
&\leq 
\frac{1}{4 \varepsilon}
 \Bigg[ \left( \int_{\Gamma_f^+} \int_{-h}^{h}  \left|q^+\right|^3  d\lambda  d\tilde\mu\right)^\frac{2}{3}+
 \left(\int_{\Gamma_f^-} \int_{-h}^{h}  \left|q^-\right|^3    d\lambda  d\tilde\mu \right)^\frac{2}{3} \Bigg]\nonumber \\
 &+ \varepsilon\Bigg[ \left(\int_{\Gamma_f^+} \int_{-h}^{h}   \left| \frac{W\rvert_{\Gamma_f^+} - W}{2h}  \right|^\frac{3}{2} d\lambda  d\tilde\mu\right)^\frac{4}{3}+
\left(\int_{\Gamma_f^-} \int_{-h}^{h}  \left| \frac{W\rvert_{\Gamma_f^-} - W}{2h}  \right|^\frac{3}{2} d\lambda  d\tilde\mu \right)^\frac{4}{3}\Bigg] \,.
\end{align}
Using Poincar\'e inequality,
we have 
\begin{align}
\int_{\Gamma_f^+} \int_{-h}^{h}   \left| \frac{W\rvert_{\Gamma_f^+} - W}{2h}  \right|^\frac{3}{2} d\lambda  d\tilde\mu
\leq 
\frac{2}{3}\int_{\Gamma_f^+} \int_{-h}^{h}   \left| W_{\lambda} \right|^\frac{3}{2}  d\lambda  d\tilde\mu
\end{align}
and 
\begin{align}
\int_{\Gamma_f^-} \int_{-h}^{h}   \left| \frac{W\rvert_{\Gamma_f^-} - W}{2h}  \right|^\frac{3}{2} d\lambda  d\tilde\mu
\leq 
\frac{2}{3}\int_{\Gamma_f^-} \int_{-h}^{h}   \left| W_{\lambda} \right|^\frac{3}{2}  d\lambda  d\tilde\mu\,.
\end{align}
Therefore,
\begin{align}
 | I_2+I_3| &\le \frac{1}{4 \varepsilon}
 \Bigg[  \left( \iint_{\Omega_f}  \left|q^+\right|^3  d\mu\right)^\frac{2}{3}+
 \left(\iint_{\Omega_f}  \left|q^-\right|^3 d\mu \right)^\frac{2}{3} \Bigg]\nonumber \\
&\qquad +2\varepsilon \left(\frac{2}{3}\right)^\frac{4}{3} \left(\iint_{\Omega_f}   \left| W_{\lambda} \right|^\frac{3}{2}  d\mu\right)^\frac{4}{3}\,. \label{modI2+3Ln}
\end{align}
Combining Eqs.~\eqref{I_1newn}, \eqref{I2+3Ln} and \eqref{modI2+3Ln}, choosing
$\varepsilon=\frac{C_0}{4}\left(\frac{3}{2}\right)^\frac{4}{3}$ and setting $C_1=\frac{1}{C_0} \left(\frac{2}{3}\right)^\frac{4}{3} $
yields
\begin{align}
& C_0 \left\|\nabla_{g_u} z\right\|_{L^\frac{3}{2}(\Omega_f)}^{2} + 
 \frac{C_0}{2}\left\|W_{\lambda}\right\|_{L^\frac{3}{2}(\Omega_f)}^{2}
 \le
C_1 \left( \|q^+ \|^2_{L^3(\Omega_f)} +\|q^-\|^2_{L^3(\Omega_f)} \right).
\end{align}
Therefore, we have
\begin{align}
\left\|\nabla_{g_u} z\right\|_{L^\frac{3}{2}(\Omega_f)}^{2} + 
 \left\|W_{\lambda}\right\|_{L^\frac{3}{2}(\Omega_f)}^{2}
\leq C \left(\|q^+ \|^2_{L^3(\Omega_f)} +\|q^-\|^2_{L^3(\Omega_f)}\right)\,,
\end{align}
with $C=2 C_1/C_0$.
\end{proof}

\begin{remark}
From the theorem above, it follows that for a given fracture with thickness $h$, 
the difference between the solutions of the two problems can be controlled by the boundary data.
\end{remark} 

However, it should be noted that in the reservoir-fracture system as $h$ goes to zero, the fracture vanishes, and the oil 
flows mostly towards the well. Then as $h$ becomes smaller, $q^+$ and $q^-$ gets smaller as well, and therefore the individual velocities remain bounded.

Next, we investigate the coupled fractured-porous media domain. We show a much stronger result, 
under the condition that as $h\rightarrow 0$ the fluxes on the fracture boundary vanish with the same speed.

\subsection{Estimates for the difference between the solutions in coupled fracture-porous media domain with linear isotropic flows.}
Next, we provide estimates for the difference between the solutions in coupled domain.
We consider half of a symmetric idealized fracture-reservoir domain depicted below and consider the flow to be linear isotropic inside the fracture. Let $\Omega_p$ and $\Omega_f$ be the porous media region and the fracture, respectively, with $\Omega=\Omega_p \cup \Omega_f$. Let $\Gamma_f$ be the top boundary of the fracture
described by $\mathbf{R}(u,v,h)$, $\Gamma_w$ be the well boundary, $\Gamma_{out}$ be the outer boundary of $\Omega$,  $\Gamma_{f_{out}}$ be the right extremum of the fracture. Let $\mathbf{n}$ and $\mathbf{n}_f$ be the outward unit normal on $\Gamma_{out}$ and $\Gamma_f$ respectively. We build the domain $\Omega_f$ such that $\mathbf{R}(u,v,\lambda)= \mathbf{r}(u,v) + \lambda \, \mathbf{n}_f$, with
$0\le\lambda \le h$. Namely, we get the profile $\mathbf{r}(u,v)$ imposing $ \mathbf{r}(u,v) + h\, \mathbf{n}_f  = \mathbf{R}(u,v,h) $, with $\mathbf{R}(u,v,h)$ given.

Let $k_p$ and $k_f$ be the permeability of the porous media and the fracture respectively. Let $W_i$ be the flow pressure in the original problem and $\bar W_i$ be the flow pressure in the reduced problem with $i \in \{p,f\}$. Here $p$ denotes the porous media and $f$ denotes the fracture.
Let $\bar q(u)$ be the flux coming into the fracture from the reservoir.
 
\begin{figure}[!htb]
\begin{center}
\includegraphics[scale=0.4]{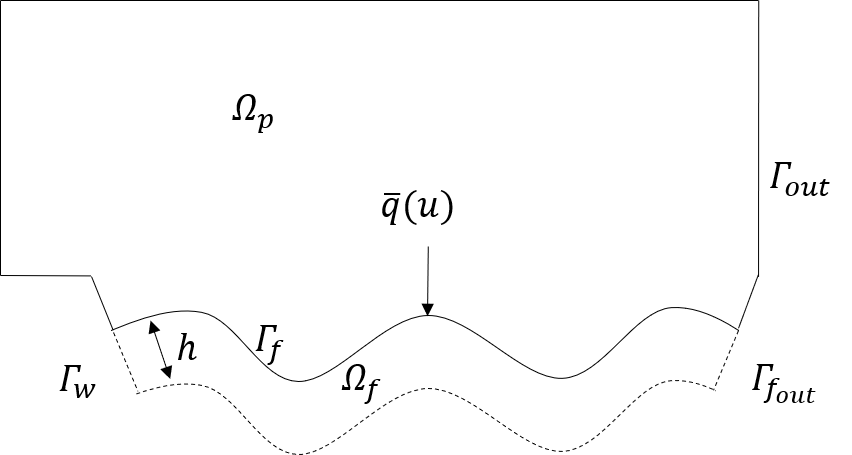}
\caption{Domain of the reduced model.}
 \label{reduced_g}	
\end{center} 
\end{figure}
 We investigate the difference between the solutions of the two problems defined below with $\nabla_g = <\frac{1}{1+f_u^2}\partial_u,\partial_{\lambda}>$ and $\nabla_{g_u} = \frac{1}{1+f_u^2}\partial_u.$
 \begin{enumerate}[(I)]
\item \label{p_1g}
The flow equations for the original problem are given by
\begin{align}
-\nabla \cdot k_p \nabla W_p&= \frac{Q}{\left|\Omega\right|}   &&  \mbox{ in } \Omega_p\,,\label{new11g} \\
-\nabla_g \cdot f_\beta(\|\nabla_g W_f\|_g)\nabla_g W_f &= \frac{Q}{|\Omega|}  && \mbox{ in } \Omega_f    \,,\label{new1g}
\end{align}
with boundary conditions
\begin{align}
W_p&=W_f, &  &\mbox{ on } \Gamma_f  \,,\label{bcaaag}\\
\left< k_p\nabla W_p, \mathbf{n}_f \right> &=
\left< f_\beta(\|\nabla_g W_f\|_g)\nabla_g W_f, \mathbf{n}_f \right>_g, &  &\mbox{ on } \Gamma_f  \,,\label{bcbbbg}\\
W_p&=W_f=0,&  &\mbox{ on } \Gamma_w  \,,\label{bc2g} \\
\left<k_p \nabla W_p,  \mathbf{n}\right>  &=0,
&&\mbox{ on }  \Gamma_{out}  \,,\label{bc3g}\\
\left< f_\beta(\|\nabla_g W_f\|_g) \nabla_g W_f,  \mathbf{n}\right>_g  &=0,
&&\mbox{ on } \Gamma_{f_{out}}  \,.\label{bc4g}
\end{align}

\item \label{p_2g}
The flow equations for the Reduced Model is given by
\begin{align}
-\nabla \cdot k_p \nabla \bar W_p&= \frac{Q}{\left|\Omega\right|}   &&  \mbox{ in } \Omega_p\,, \label{new22g} \\
-\nabla_g \cdot  f_\beta(\|\nabla_g \bar{W}_f\|_g) \nabla_g \bar W_f & =\frac{Q}{\left|\Omega\right|} + \frac{\bar q}{h}   && \mbox{ in } \Omega_f    \,, \label{new2g}
\end{align}
with boundary conditions \vspace{-5mm}
\begin{align}
\bar W_p&=\bar W_f, &  &\mbox{ on } \Gamma_f  \,,\label{bcaaa2g}\\
\bar W_p&=\bar W_f=0,&  &\mbox{ on } \Gamma_w  \,,\label{bc2ag} \\
\left<k_p \nabla \bar W_p , \mathbf{n}_f \right> &=\bar q,
&&\mbox{ on } \Gamma_{f}  \,,\label{bc1ag}\\
\left<k_p \nabla \bar W_p,  \mathbf{n}\right>  &=0,
&&\mbox{ on }  \Gamma_{out}  \,,\label{bc3ag}\\
\left< f_\beta(\|\nabla_g \bar W_f\|_g) \nabla_g \bar W_f,  \mathbf{n}\right>_g  &=0,
&&\mbox{ on } \Gamma_{f_{out}}  \,.\label{bc4ag}
\end{align}
\end{enumerate}

For simplicity, we drop the subscripts in $W_p$, $W_f$, $\bar W_p$ and $\bar W_f$ and use notations $W$ and $\bar W$ for the solutions in the original problem \eqref{p_1g} and reduced problem \eqref{p_2g} respectively. Each of the solution corresponds to their domain of integration.

We assume that, for a manifold with appropriate conditions on Riemannian metric, the following conjecture to be true.

\begin{conjecture}\label{conj}
For $\bar q(u)$ given by Eq.~\eqref{bc1ag}, $\left|\dfrac{\bar q(u)}{h}\right|\leq C_0$ for some constant $C_0$ and for all $(u,v,\lambda)\in \Gamma_f$. 
\end{conjecture}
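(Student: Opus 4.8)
Since the final statement is posed as a \emph{conjecture} that the authors assume rather than prove, I can only sketch a strategy and indicate precisely where it stalls. Write $\bar q(u)=\langle k_p\nabla\bar W_p,\mathbf n_f\rangle$ for the normal flux that the porous block exerts on the interface $\Gamma_f$, as in \eqref{bc1ag}. The target is the uniform-in-$h$ estimate $|\bar q/h|\le C_0$, which says the influx into the fracture must collapse at exactly the linear rate $O(h)$ as the thickness degenerates. The plan is to combine three ingredients: (a) uniform a priori energy bounds for the two coupled pressures, which show that the \emph{along-fracture} velocity stays bounded; (b) a one-dimensional conservation identity on the fracture that converts the interface flux $\bar q$ into the divergence of the along-fracture flux; and (c) an upgrade from the resulting integral control of $\bar q$ to a pointwise control.

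First I would establish the uniform energy bounds. Testing the porous equation \eqref{new22g} against $\bar W_p$ and the reduced fracture equation \eqref{new2g} against $\bar W_f$, adding, and using the flux condition \eqref{bc1ag} together with the continuity \eqref{bcaaa2g} to cancel the interface terms via the integration-by-parts formula of Theorem \ref{IBP}, one produces an identity whose left-hand side is coercive by the monotonicity Lemma \ref{lidia_n}. Arguing as in Lemma \ref{boundness_n} then yields $\|\nabla\bar W_p\|_{L^{3/2}(\Omega_p)}\le C$ and $\|\nabla_g\bar W_f\|_{L^{3/2}(\Omega_f)}\le C$ with $C$ independent of $h$. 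In particular the along-fracture velocity $f_\beta(\|\nabla_g\bar W_f\|_g)\,\partial_u\bar W_f$ stays bounded as $h\to0$.

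Next I would exploit the one-dimensional structure from Proposition \ref{red_prop_g2}, now with the coupled source $\tfrac{Q}{|\Omega|}+\tfrac{\bar q}{h}$. Integrating in $u$ from a generic section to the sealed tip $\Gamma_{f_{out}}$ (no-flux condition \eqref{bc4ag}) gives
\begin{equation}
\frac{2h(u)}{\sqrt{1+f_u^2}}\,f_\beta\bigl(\|\nabla_g\bar W_f\|_g\bigr)\,\partial_u\bar W_f
=\int_u^{L}\Bigl(2h(s)\tfrac{Q}{|\Omega|}+2\,\bar q(s)\Bigr)\sqrt{1+f_s^2}\,ds\,.
\end{equation}
By the previous step the left-hand side is $O(h)$, and the volumetric contribution on the right is also $O(h)$, so this already forces $\int_u^{L}\bar q\,ds=O(h)$: the \emph{accumulated} influx vanishes linearly in $h$. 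Differentiating the identity expresses the interface flux pointwise as
\begin{equation}
\bar q(u)=-\frac{1}{2\sqrt{1+f_u^2}}\,\partial_u\!\left(\frac{2h(u)}{\sqrt{1+f_u^2}}\,f_\beta\,\partial_u\bar W_f\right)-h(u)\frac{Q}{|\Omega|}\,,
\end{equation}
so $|\bar q/h|\le C_0$ is equivalent to an $O(h)$ bound on the $u$-derivative of the along-fracture flux.

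The hard part — and the reason this is left as a conjecture — is exactly this last upgrade. In the constant-thickness regime the identity reduces $\bar q/h$ to (half of) the derivative of the along-fracture velocity, which is a \emph{second-order} quantity that the first-order energy bounds of Lemma \ref{boundness_n} do not control; one would have to prove a uniform-in-$h$ $W^{2,\infty}$-type estimate for $\bar W_f$, equivalently a uniform bound on the Dirichlet-to-Neumann map of the linear porous operator $-\nabla\cdot k_p\nabla$ applied to the interface data $\bar W_p|_{\Gamma_f}=\bar W_f(u)$. A natural route is a barrier/Hopf-lemma argument trapping $\bar W_p-\bar W_f$ between super- and sub-solutions whose normal derivatives at $\Gamma_f$ are $O(h)$, but its constants deteriorate as the slab collapses to a lower-dimensional set. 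For variable thickness the situation is worse: the differentiated identity produces an extra term $\sim h'(u)\,f_\beta\,\partial_u\bar W_f$, whose magnitude is $O(h')$ rather than $O(h)$, so bounding $\bar q/h$ uniformly demands explicit control of $h'/h$ and of the interface curvature. These are precisely the ``appropriate conditions on the Riemannian metric'' under which the statement is assumed, and making them quantitative — so as to rule out any concentration of $\bar q$ — is where the argument must be completed; I expect this uniform higher-regularity estimate on a degenerating geometry to be the principal obstacle.
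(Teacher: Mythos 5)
The paper offers no proof of this statement: it is explicitly labelled a conjecture and introduced with ``We assume that, for a manifold with appropriate conditions on Riemannian metric, the following conjecture to be true.'' It then enters Theorem \ref{TdifferenceCoupled} only as a hypothesis. So there is nothing in the paper to compare your argument against, and your proposal is right to present itself as a strategy sketch rather than a proof. As a sketch, the skeleton is sensible --- uniform energy bounds, a one-dimensional flux-balance identity obtained by integrating \eqref{new2g} to the sealed tip \eqref{bc4ag}, and then an attempted upgrade to a pointwise bound on $\bar q/h$ --- and the obstruction you isolate (a uniform-in-$h$ second-order / pointwise estimate on a domain that collapses to a hypersurface) is indeed where any honest proof would have to do real work.

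Two additional gaps in the sketch itself are worth naming, because they occur \emph{before} the step you flag as the hard one. First, the claim that Lemma \ref{boundness_n} yields constants independent of $h$ is not automatic: the domain $\Omega_f$, its measure, and the trace/Poincar\'e constants entering that argument all degenerate as $h\to 0$, so uniformity of the energy bound is itself something to prove, not quote. Second, you pass from the $L^{3/2}$ gradient bound to the assertion that the along-fracture velocity $f_\beta(\|\nabla_g\bar W_f\|_g)\,\partial_u\bar W_f$ ``stays bounded,'' i.e.\ a pointwise bound; since $f_\beta(\xi)\,\xi\sim\sqrt{\xi/\beta}$ for large $\xi$, an integral bound on the gradient does not control the velocity in $L^\infty$, so the left-hand side of your flux-balance identity is not yet known to be $O(h)$ pointwise. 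Neither defect is fatal to the outline, but both belong on the list of missing estimates alongside the higher-regularity issue you already identify. Given that the authors themselves leave the statement unproven, the most accurate verdict is: your proposal neither matches nor falls short of a paper proof, because none exists; it is a reasonable research plan with the principal difficulty correctly located and two preliminary uniformity claims still to be justified.
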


Henceforth $d\mu$ and $ d \Omega$ are the elementary volumes of the manifold and the Euclidean domain respectively. $d\tilde \mu$ and $dS$ are the elementary areas of the manifold and the Euclidean domain respectively. By construction, the fracture domain $\Omega_f$ has the following property: the element of volume and the element of area of the fracture in Euclidean domain and on manifold are exactly the same, namely, $d\Omega=d\mu$ and $dS=d\tilde\mu$.

\begin{theorem} \label{TdifferenceCoupled} 
Let $W$ and $\bar W$ be the solutions of B.V.P.s \eqref{p_1g} and \eqref{p_1g} respectively. Then, under the assumption in conjecture \eqref{conj} the following estimate holds for some constant $C$ that depends on $C_0$.
\begin{align}
\|\nabla \left(W-\bar W\right)\|_{L^2(\Omega_p)}^2  \leq C h^\frac{8}{3} \,.
\end{align}
\end{theorem}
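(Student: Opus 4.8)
The plan is to test the subtracted equations against $z:=W-\bar W$ on each subdomain and to exploit that $z$ is $k_p$-harmonic in the porous block. Since the porous equations \eqref{new11g} and \eqref{new22g} coincide, subtracting gives $-\nabla\cdot(k_p\nabla z)=0$ in $\Omega_p$, so all of the forcing enters through the interface $\Gamma_f$. Multiplying by $z$ and integrating by parts over $\Omega_p$, the contributions on $\Gamma_w$ (where $z=0$) and on $\Gamma_{out}$ (where the no-flux conditions \eqref{bc3g}, \eqref{bc3ag} apply) vanish, leaving only a term on $\Gamma_f$. By the two interface conditions \eqref{bcbbbg} and \eqref{bc1ag} this term equals $-\int_{\Gamma_f}z(\Phi-\bar q)\,dS$, where $\Phi=\langle f_\beta(\|\nabla_g W_f\|_g)\nabla_g W_f,\mathbf{n}_f\rangle_g$ is the Forchheimer flux carried by the original solution.

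Next I would run the analogous computation inside the fracture. Subtracting \eqref{new2g} from \eqref{new1g}, testing against $z$, and applying the manifold integration-by-parts formula (Theorem \ref{IBP}), the boundary contributions on $\Gamma_w$, on $\Gamma_{f_{out}}$, and on the symmetry plane vanish, while on $\Gamma_f$ the original flux $\Phi$ survives and the reduced Forchheimer flux is zero (the reservoir--fracture exchange being encoded in the volumetric source $\bar q/h$). Adding the porous and fracture identities, and using the construction property $dS=d\tilde\mu$ on $\Gamma_f$, the $\Phi$-terms cancel exactly, yielding
$$\int_{\Omega_p}k_p|\nabla z|^2\,d\Omega+\int_{\Omega_f}\big\langle f_\beta(\|\nabla_g W_f\|_g)\nabla_g W_f-f_\beta(\|\nabla_g\bar W_f\|_g)\nabla_g\bar W_f,\,\nabla_g z\big\rangle_g\,d\mu=\int_{\Gamma_f}z\,\bar q\,dS-\int_{\Omega_f}\tfrac{\bar q}{h}\,z\,d\mu.$$
The second integral on the left is nonnegative, and by Lemma \ref{lidia_n} with $q=3/2$ together with the a priori bound of Lemma \ref{boundness_n} it dominates $C\|\nabla_g z\|_{L^{3/2}(\Omega_f)}^2$.

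The heart of the matter is the right-hand side. Because $\bar W$ is $\lambda$-independent one has $\partial_\lambda z=W_\lambda$, so the two right-hand terms telescope: writing $z=z|_{\lambda=h}-\int_\lambda^h W_\lambda\,ds$ and matching the surface and volume measures (the geometric factor $\sqrt{1+h_u^2}-1$ contributing only a higher-order $O(h^2)$ correction that is absorbed), the right-hand side reduces to $\int_{\Omega_f}\frac{\bar q}{h}\int_\lambda^h W_\lambda\,ds\,d\mu$. Here conjecture \ref{conj} is essential: it supplies $|\bar q/h|\le C_0$ uniformly, which is exactly what upgrades the naive $O(h^2)$ bound to the sharper one. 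A H\"older estimate in the thin direction, with $|\Omega_f|=O(h)$, then gives
$$\Big|\int_{\Omega_f}\tfrac{\bar q}{h}\int_\lambda^h W_\lambda\,ds\,d\mu\Big|\le C_0\,h\,\|W_\lambda\|_{L^1(\Omega_f)}\le C\,h^{4/3}\,\|W_\lambda\|_{L^{3/2}(\Omega_f)}\le C\,h^{4/3}\,\|\nabla_g z\|_{L^{3/2}(\Omega_f)}.$$

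Finally I would close the estimate by absorption. Combining the lower bound $C\|\nabla_g z\|_{L^{3/2}(\Omega_f)}^2$ coming from the monotone term with the upper bound $Ch^{4/3}\|\nabla_g z\|_{L^{3/2}(\Omega_f)}$ yields $\|\nabla_g z\|_{L^{3/2}(\Omega_f)}\le Ch^{4/3}$; feeding this back into the right-hand side bounds the whole energy by $Ch^{8/3}$, and since the porous term $\int_{\Omega_p}k_p|\nabla z|^2$ alone is controlled by this energy (with $k_p$ bounded below) the claim $\|\nabla(W-\bar W)\|_{L^2(\Omega_p)}^2\le Ch^{8/3}$ follows. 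I expect the main obstacle to be the rigorous justification of the telescoping step and of the claim that the mismatch between the $\Gamma_f$ surface measure and the fracture volume measure is genuinely higher order; this is precisely where the $\lambda$-independence of the reduced solution, the identity $dS=d\tilde\mu$, and the uniform control from conjecture \ref{conj} must be used together, and it is what produces the nontrivial exponent $8/3$ in place of the trivial $2$.
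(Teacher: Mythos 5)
Your proposal is correct and follows essentially the same route as the paper: the same energy identity with cancelling interface fluxes, the same monotonicity lower bound via Lemmas \ref{lidia_n} and \ref{boundness_n}, and the same use of Conjecture \ref{conj} combined with a thin-direction Poincar\'e/H\"older argument that produces the factor $|\Omega_f|^{1/3}\sim h^{1/3}$. The only (harmless) deviation is in the closing step: you bootstrap (first deriving $\|\nabla_g z\|_{L^{3/2}(\Omega_f)}\le C h^{4/3}$ and feeding it back), whereas the paper applies Young's inequality with $\epsilon\sim h^{-2}$ to absorb the $\|W_\lambda\|_{L^{3/2}}^2$ term into the left-hand side; both give the same $h^{8/3}$ rate.
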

\begin{proof}
Let $z=z(u,v,\lambda) = W- \bar W$. Subtracting Eq.~\eqref{new22g} and ~\eqref{new2g} from Eq.~\eqref{new11g} and ~\eqref{new1g}, respectively, we obtain
\begin{align}
-\nabla \cdot k_p \nabla z&= 0   &&  \mbox{ in } \Omega_p\,,\label{P1g} 
\end{align} and
\begin{align}
-\nabla_g \cdot \left(f_\beta(\|\nabla_g W\|_g)\nabla_g W-
f_\beta(\|\nabla_g \bar W\|_g)\nabla_g \bar W\right) &= -\frac{\bar q}{h} && \mbox{ in } \Omega_f    \,.\label{P2g}
\end{align}
Then, after adding Eqs.~\eqref{P1g} and \eqref{P2g}, multiplying by $z$ and integrating over the volume of the domain we obtain,

\begin{align}
\iint_{\Omega_p}-\nabla \cdot k_p  \nabla z \,z\, d \Omega
+&\iint_{\Omega_f}-\nabla_g \cdot \left(f_\beta(\|\nabla_g W\|_g)\nabla_g W-
f_\beta(\|\nabla_g \bar W\|_g)\nabla_g \bar W\right)\,z d \mu\nonumber \\
&= -\iint_{\Omega_f} \frac{\bar q}{h} z\, d \mu\,. \label{eq000}
\end{align}
Using Green's formula on Riemannian manifolds (See Ref \cite{do2012differential}) and boundary conditions \eqref{bc2g}, \eqref{bc3g}, \eqref{bc4g}, \eqref{bc2ag}, \eqref{bc3ag} and \eqref{bc4ag}, the first and the second integrals in the left hand side of the above equation can be rewritten as
\begin{align}
\iint_{\Omega_p}-\nabla \cdot k_p  \nabla z \,z\, d \Omega=
\iint_{\Omega_p} k_p  \left(\nabla z\right)^2 d \Omega
-\int_{\Gamma_f} \left< k_p \nabla z, (-\mathbf{n}_f) \right> z\big \rvert_{\Gamma_f}d S\,,\label{i1}
\end{align}
and 
\begin{align}
\iint_{\Omega_f}-\nabla_g &\cdot \Big(f_\beta(\|\nabla_g W\|_g\nabla_g W -
f_\beta(\|\nabla_g \bar W\|_g)\nabla_g \bar W\Big)\,z d \mu \nonumber \\
=&\iint_{\Omega_f}\left<\Big(f_\beta(\|\nabla_g W\|_g\nabla_g W -
f_\beta(\|\nabla_g \bar W\|_g)\nabla_g \bar W\Big) , \nabla_g z\right>_g d \mu \nonumber \\
-&\int_{\Gamma_f} \left< \Big(f_\beta(\|\nabla_g W\|_g\nabla_g W -
f_\beta(\|\nabla_g \bar W\|_g)\nabla_g \bar W\Big) , \mathbf{n}_f \right>_g z\big \rvert_{\Gamma_f}d \tilde \mu\,,\label{i2}
\end{align} respectively.\\
Combining Eqs.~\eqref{i1}, \eqref{i2}, boundary condition \eqref{bcbbbg} and since $\Big< f_\beta(\|\nabla_g \bar W\|_g)\nabla_g \bar W, \mathbf{n}_f \Big>_g=0$, Eq.~\eqref{eq000} can be rewritten as
\begin{align}
\iint_{\Omega_p} k_p  \left(\nabla z\right)^2 d \Omega
+\iint_{\Omega_f}& \left<\Big(f_\beta(\|\nabla_g W\|_g\nabla_g W -
f_\beta(\|\nabla_g \bar W\|_g)\nabla_g \bar W\Big) , \nabla_g z\right>_g d \mu \nonumber \\
&=\int_{\Gamma_f} \left<k_p \nabla \bar W, \mathbf{n}_f \right> z\big \rvert_{\Gamma_f}d S
-\iint_{\Omega_f} \frac{\bar q}{h} z \, d \mu\,.
\end{align}
Due to boundary condition \eqref{bc1ag} and since $dS=d\tilde \mu$, we have
\begin{align}
\iint_{\Omega_p} k_p  \left(\nabla z\right)^2 d \Omega
+\iint_{\Omega_f} & \left<\Big(f_\beta(\|\nabla_g W\|_g\nabla_g W -
f_\beta(\|\nabla_g \bar W\|_g)\nabla_g \bar W\Big) , \nabla_g z\right>_g d \mu \nonumber \\
&= \int_{\Gamma_f} \bar q\, z\big \rvert_{\Gamma_f}d \tilde \mu  -\iint_{\Omega_f} \frac{\bar q}{h}z\,d \mu \nonumber \\
&= \iint_{\Omega_f} \frac{\bar q}{h} \left(z\big \rvert_{\Gamma_f}-z\right)d \mu\nonumber \\
&=\iint_{\Omega_f} \frac{\bar q}{h} \left(W\big \rvert_{\Gamma_f}-W\right)d \mu\,.\label{ori_diffg}
\end{align}
Denote \begin{align}
I_1&=  k_p \iint_{\Omega_p} \left(\nabla z\right)^2 d \Omega = k_p\|\nabla z\|_{L^2(\Omega_p)}^2\,, \nonumber \\
I_2&= \iint_{\Omega_f} \left<\Big(f_\beta(\|\nabla_g W\|_g\nabla_g W -
f_\beta(\|\nabla_g \bar W\|_g)\nabla_g \bar W\Big) , \nabla_g z\right>_g d \mu\,, \nonumber \\
I_3 &= \iint_{\Omega_f}\frac{\bar q}{h} \left(W\big \rvert_{\Gamma_f}-W\right)d \mu\,.
\end{align}
Then,
\begin{align}
I_1+I_2=I_3\,. \label{I123g}
\end{align}
Consider $I_2$.
By Lemma \ref{boundness_n} and \ref{lidia_n} with $q=\frac{3}{2}$, there exist positive constant $C_1$ such that,
 \begin{align}
I_2 &\geq C_1 
 \left\|\nabla_g z \right\|_{L^\frac{3}{2}(\Omega_f)}^{2} = 
 C_1  \left( 
 \left\|\nabla_{g_u} z\right\|_{L^\frac{3}{2}(\Omega_f)}^{2} + 
 \left\|W_{\lambda}\right\|_{L^\frac{3}{2}(\Omega_f)}^{2}
  \right)\,.
\label{I2g}
\end{align}
Also, from conjecture \eqref{conj}, we have 
\begin{align}
\left|I_3\right|\leq & C_0 \iint_{\Omega_f}\left|W\big \rvert_{\Gamma_f}-W\right|d \mu \nonumber \\
& =C_0 \iint_{\Omega_f}\left|W\big \rvert_{\Gamma_f}-W\right|d \Omega
\end{align}
Using H\"older and Cauchy inequalities we obtain
\begin{align}
\left|I_3\right| &\leq C_0 \left(\iint_{\Omega_f}\left|W\big \rvert_{\Gamma_f}-W\right|^\frac{3}{2} d \Omega\right)^\frac{2}{3}\left(\iint_{\Omega_f}d \Omega\right)^\frac{1}{3} \nonumber \\
 &\leq \frac{C_0}{4\epsilon}\left(\iint_{\Omega_f}\, d \Omega \right)^\frac{2}{3}+ C_0\epsilon \left(\iint_{\Omega_f}\left|W\big \rvert_{\Gamma_f}-W\right|^\frac{3}{2}d \Omega \right)^\frac{4}{3}\nonumber \\
 &\leq \frac{C_0}{4\epsilon}\left|\Omega_f\right|^\frac{2}{3} + C_0\epsilon \left(\int_{\Gamma_f} \int_0^h\left|W\big \rvert_{\Gamma_f}-W\right|^\frac{3}{2} d \lambda\, d \tilde \mu \right)^\frac{4}{3} \,,
\end{align} where $\left|\Omega_f\right|$ is the volume of the fracture.
Then, using Poincar\'e inequality, we have
\begin{align}
\int_{\Gamma_f} \int_0^h\left|W\big \rvert_{\Gamma_f}-W\right|^\frac{3}{2} d \lambda\, d \tilde \mu
\leq 
\frac{2}{3}h^\frac{3}{2}\int_{\Gamma_f} \int_{0}^{h}   \left| W_{\lambda} \right|^\frac{3}{2}  d\lambda  d\tilde\mu\,.
\end{align}
Therefore,
\begin{align}
\left|I_3\right|&\leq \frac{C_0}{4\epsilon}\left|\Omega_f\right|^\frac{2}{3}+ C_0\varepsilon \left(\frac{2}{3}\right)^\frac{4}{3}h^2 \left(\iint_{\Omega_f} \left| W_{\lambda} \right|^\frac{3}{2}  d\mu\right)^\frac{4}{3} \nonumber \\
&= \frac{C_0}{4\epsilon}\left|\Omega_f\right|^\frac{2}{3}+C_0\varepsilon \left(\frac{2}{3}\right)^\frac{4}{3}h^2\,\|W_\lambda\|_{L^\frac{3}{2}(\Omega_f)}^2 \,.
\label{I3g}
\end{align}
Combining Eqs.~\eqref{I123g}, \eqref{I2g}, \eqref{I3g} we obtain
\begin{align}
 & k_p \|\nabla z\|_{L^2(\Omega_p)}^2 +
 C_1 \left\|\nabla_{g_u} z\right\|_{L^\frac{3}{2}(\Omega_f)}^{2} + 
 C_1 \left\|W_{\lambda}\right\|_{L^\frac{3}{2}(\Omega_f)}^{2} \nonumber \\ 
 & \qquad \leq \frac{C_0}{4\epsilon}\left|\Omega_f\right|^{\frac{2}{3}}+ C_0\varepsilon \left(\frac{2}{3}\right)^\frac{4}{3}h^2\, \|W_\lambda\|_{L^\frac{3}{2}(\Omega_f)}^2\,.
\end{align}
Choose $\epsilon=\frac{1}{h^2}\frac{C_1}{C_0}\left(\frac{3}{2}\right)^{\frac{4}{3}}$. Then
\begin{align}
k_p \|\nabla z\|_{L^2(\Omega_p)}^2  \leq C h^2 \left|\Omega_f\right|^\frac{2}{3}\,,
\end{align}
with $\tilde C=\dfrac{C_0^2}{4C_1}\left(\dfrac{2}{3}\right)^\frac{4}{3}$. Therefore,
\begin{align}
\|\nabla z\|_{L^2(\Omega_p)}^2  \leq C h^\frac{8}{3} \,.
\end{align}
where $\left|\Gamma_f\right|$ is the surface area of the boundary $\Gamma_f$ and $C=\dfrac{\tilde C}{k_p}\left|\Gamma_f\right|^\frac{2}{3}.$
\end{proof}
\begin{remark}
From the theorem above, it can be observed that the estimate for the difference between the solutions of the original problem
and the reduced problem goes to zero as $h$ going to zero, representing a much stronger estimate.
\end{remark}


\section{Numerical analysis and simulations}\label{NA}
In this section, we present numerical results to demonstrate the validity of our approach. Namely, we show that the solutions of the Reduced Model I \eqref{FINAL_THM} and the Reduced Model II \eqref{red_Thm} are close to the solution of the original problem \eqref{actual} for  different fracture-reservoir geometries. 

All the simulations have been performed using COMSOL Multiphysics software \cite{bv1998comsol}. The grid size has been refined until changes in the pressure distribution between the previous and the next steps are negligible. 
The length and the thickness of the fracture is selected in relative units, and are dimensionless. Hydrodynamic parameters such as permeability and Forchheimer coefficient are numerically chosen without bonding to actual data of the porous media properties. In all simulations, the following parameters have been fixed: permeability in the porous media $k_p=0.01$, permeability inside the fracture $k_f=1$ and production rate $Q=1$.

\subsection{Pressure distribution of the flow inside the fracture}\label{1}
In this section, we compare the pressure distributions of the flow obtained from the original model, the Reduced Model I and the Reduced Model II, inside the domain of fracture only, for different fracture geometries.
For both Examples \ref{Ex1} and \ref{Ex4} we perform the following numerical simulations:
First, we obtain the solution of the original model \eqref{actual} inside the fracture, imposing zero Dirichlet boundary condition on the well, given flux boundary conditions on the top and bottom of the fracture, and zero Neumann  boundary condition on the right end (namely, system \eqref{well}-\eqref{noFluxBdry} with $q^\pm=10$). 
Then, we solve the Reduced model I \eqref{FINAL} and Reduced Model II \eqref{FINAL2} on the barycentric line of the fracture cross section, with zero Dirichlet boundary condition on the well and zero Neumann boundary condition on the right end. 


\begin{example} \label{Ex1} {\rm  
We consider the fracture geometry with barycentric surface given by  $$\mathbf{r}(u,v)= \langle u,v,\sqrt{1-u^2} \rangle, \mbox{ with } (u,v)\in \left[-\frac{\sqrt{3}}{2}, \frac{\sqrt{3}}{2}\right]\times(-\infty,\infty),$$ and constant thickness $2h(u)$.
Since the solution of the problem does not depend on $v$, we solve our equations only on the cross section given in Figure \ref{half_circle}.

\begin{figure}[h!]
\begin{minipage}{0.48\textwidth}\vspace{-4.5mm}
\includegraphics[scale=0.25]{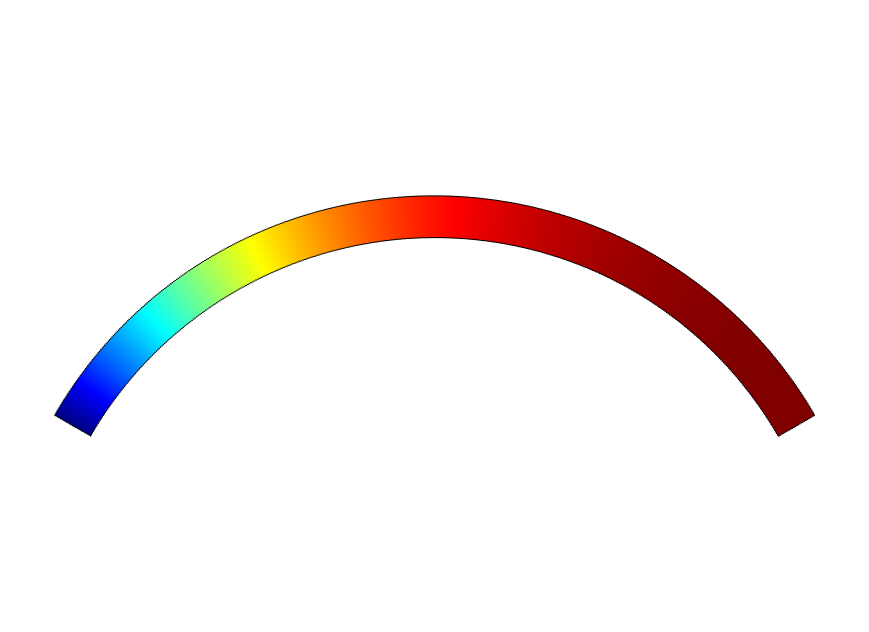}
\end{minipage}
\hspace{1mm}
\begin{minipage}{0.48\textwidth}
\vspace{-10mm}
\includegraphics[scale=0.3]{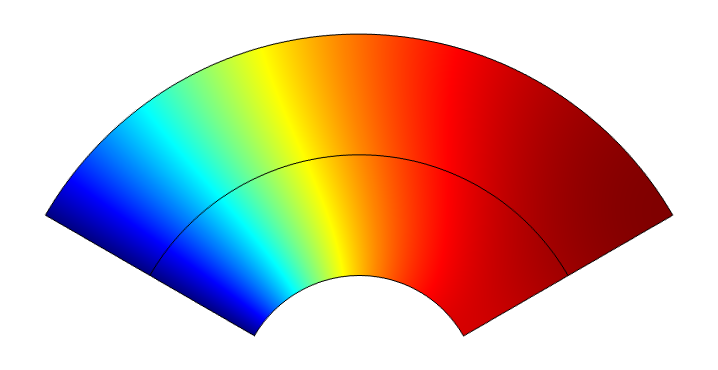} \vspace{-12mm}
\end{minipage}
\caption{Fracture geometry 1: pressure distribution inside a thin fracture-$2h(u)=0.025$ (left) and a thick fracture-$2h(u)=1$ (right), obtained using the original model.}
 \label{half_circle}	
\end{figure}

In Figure \ref{F1}, we compare the pressure distributions obtained from the original model (on the barycentric line of the fracture cross section) and the Reduced Models for $\beta=0.1$, $2h(u)=0.025$ (left) and $2h(u)=1$ (right). It is evident that the solutions are almost identical to each other when the thickness of the fracture is relatively small. When the thickness is large, it can be observed that the solution obtained from the Reduced Model I is very close to the solution of the original model, but the solution of the Reduced Model II deviates from the original one. This can be explained by the presence versus the absence of the parameter $\lambda$ in the Reduced Model I and II, respectively, as explained in Remark \ref{remark_lambda}. 
\begin{figure}[h!]
\begin{minipage}{0.48\textwidth}
\includegraphics[width=1\linewidth, height=5.5cm]{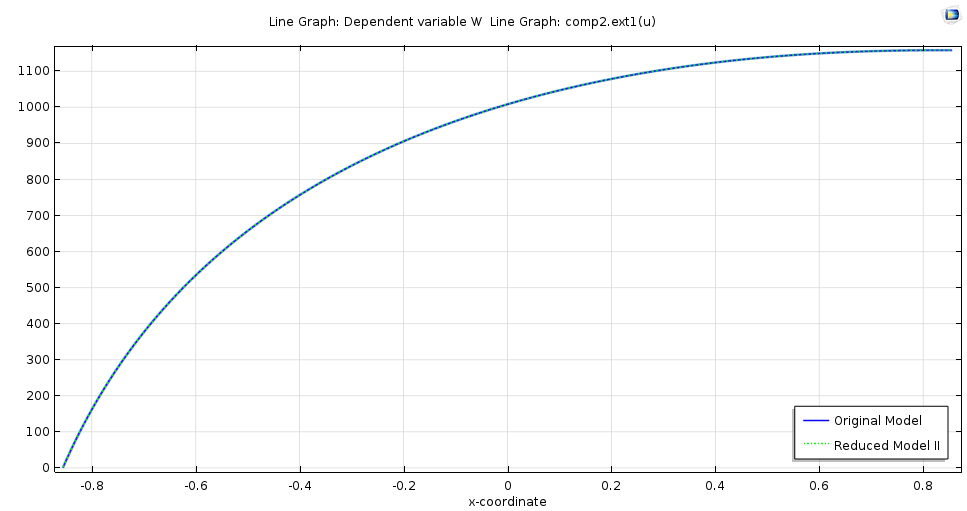}
\end{minipage}
\hspace{1mm}
\begin{minipage}{0.48\textwidth}
\includegraphics[width=1\linewidth, height=5.5cm]{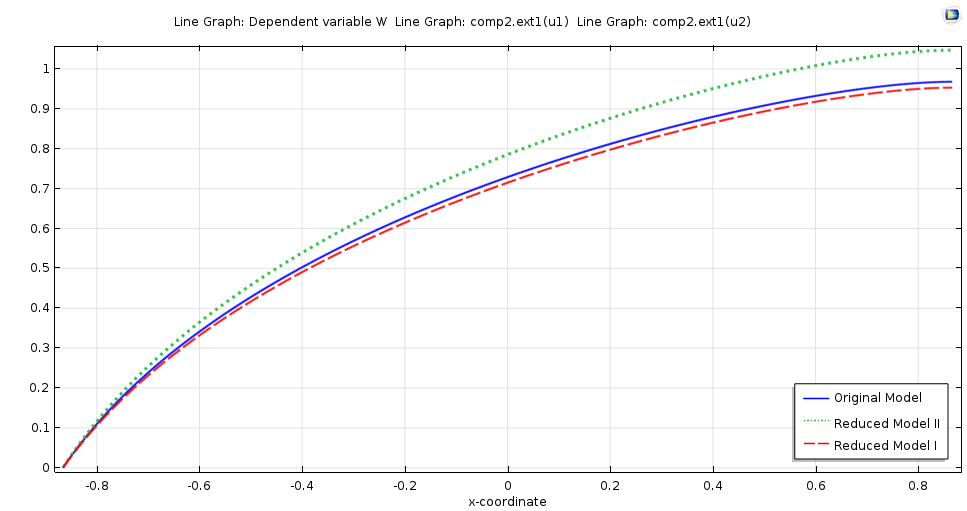}
\end{minipage}
\caption{Fracture geometry 1: comparison between the pressure distributions obtained from the original model, the Reduced  I and Reduced Model II for $\beta=0.1$, $2h(u)=0.025$ (left) and $2h(u)=1$ (right) respectively.} \label{F1}
\end{figure}\vspace{1mm}}
\end{example}

\begin{example} \label{Ex4} {\rm 
Next, we consider the fracture geometry with barycentric surface given by 
$$\mathbf{r}(u,v)=\left \langle u,v,2\sin(u)\right \rangle, \mbox{ with } (u,v)\in \left[0,2\pi\right]\times(-\infty,\infty),$$ and variable thickness $2h(u)=0.2(2+0.5\sin(7u))$.
Again, since the solution of the problem does not depend on $v$, we solve our equations only on the cross section given in Figure \ref{idk}.
\begin{figure}[h]
	\begin{center}
	\includegraphics[scale=0.4]{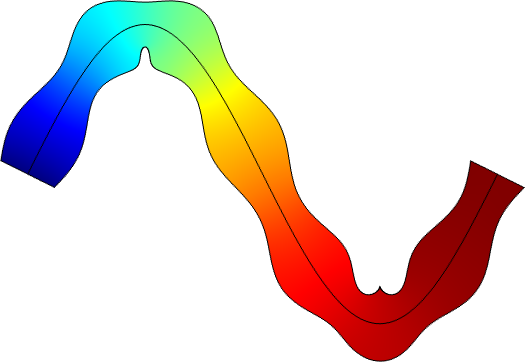}
\caption{Fracture geometry 4: pressure distribution inside a fracture with changing thickness, obtained using the original model.}
\label{idk}	
\end{center}
\end{figure}

\pagebreak
In this example, the thickness of the fracture changes depending on the parameter $u$. In Figure \ref{F4}, we compare the pressure distributions obtained from the original model (on the barycentric line of the fracture cross section), the Reduced Model I and the Reduced Model II, for $\beta=0$ (Darcy) and $\beta=0.1$ (Forchheimer). The solutions of the Reduced Model I remain close to the solutions of the original model. However, the solutions of the Reduced Model II deviate from the solutions of the original model, especially in the Forchheimer case.
\begin{figure}[h!]
\begin{minipage}{0.48\textwidth}
\includegraphics[width=1\linewidth, height=5.5cm]{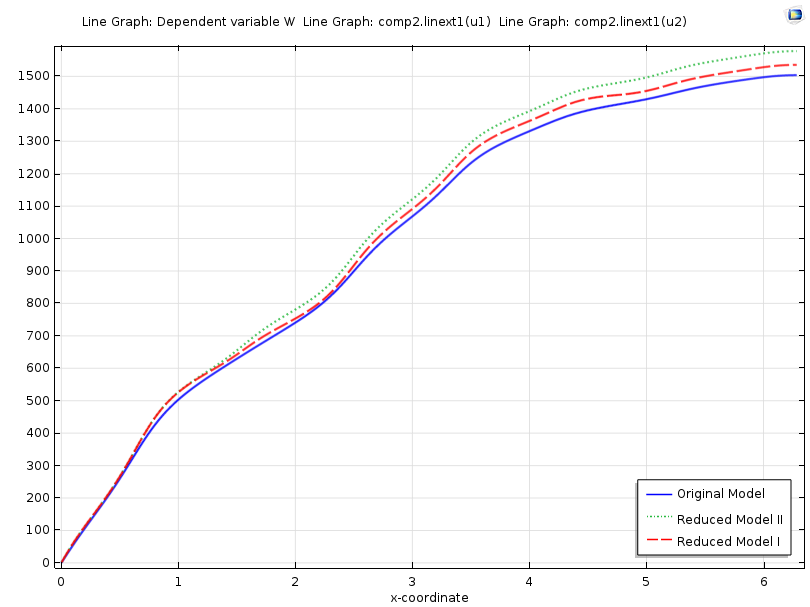}
\end{minipage}
\hspace{1mm}
\begin{minipage}{0.48\textwidth}
\includegraphics[width=1\linewidth, height=5.5cm]{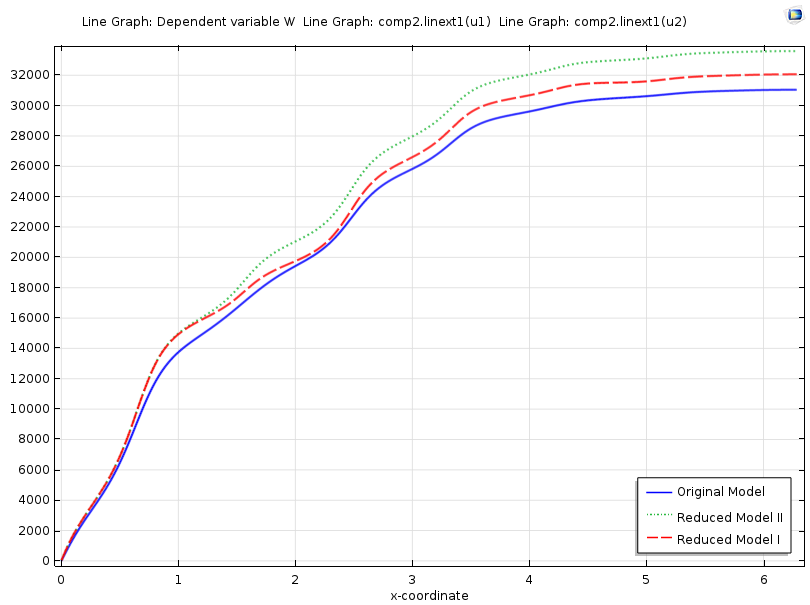}
\end{minipage}
\caption{Fracture geometry 4: comparison between the pressure distributions inside the fracture, obtained using the original model, 
the Reduced Model I and the Reduced Model II, for $\beta=0$ (left) and $\beta =0.1$ (right), respectively.} \label{F4}
\end{figure}}\vspace{1mm}
\end{example}

\subsection{Diffusive capacity in the coupled fractured porous media domain} \label{2}
In this section, we calculate the diffusive capacities in the coupled fracture reservoir domain, using the original model~\eqref{actual1} and the Reduced Model I~\eqref{FINAL}.


The productivity index, which characterizes
the well capacity to \textit{take-in} hydrocarbons from reservoir, depends on the geometry of the fracture and its conductivity, 
and it is evaluated using the diffusive capacity of the well-reservoir-fracture system.
Denote the diffusive capacities of the original model and the Reduced Model I by $PI_{PD}$ and $PI_{R_1}$, respectively. We compare the diffusive capacities obtained from the two models, as the amplitude of the thickness $H$ and Forchheimer coefficient $\beta$ changes, for different geometries of the fracture in the reservoir. Also, we evaluate the relative error as $\mbox{error} = \left|\frac{PI_{PD}-PI_{R_1}}{PI_{PD}}\right|.$
The goal here is to show that, in the fully coupled domain, the diffusive capacity calculated using the Reduced Model I is very close to the diffusive capacity calculated using the original model.


\begin{example}\label{Ex6} {\rm 
In this example, we numerically investigate the pressure distribution and the diffusive capacity of an infinite long reservoir, whose cross section is the rectangle $[-10,20]\times[-10,10]$. The well is modeled as an infinite long cylindrical surface with square cross section of side length $0.5$, centered on the y-axis and rotated by 60 degrees around it. The fracture barycentric surface is given by $$\mathbf{r}(u,v)=\langle x_c,y_c,z_c \rangle + \left \langle  u, v, 2\sin(u)\right\rangle  \mbox{ with } (u,v)\in \left[0,2\pi\right]\times(-\infty,\infty),$$ and variable thickness $2h(u)=H(2+0.5\sin(7u))$. The vector $\langle x_c,y_c,z_c \rangle = \langle \frac{1}{8},0,\frac{\sqrt3}{8}\rangle$ 
has been chosen so that the fracture starts from the center of the top-right face of the well. 
Since the solution of the problem does not depend on $y$, we solve our equations only on the cross section of the domain given in Figures \ref{C1} (left and right).

\begin{figure}[h!]
\begin{minipage}{0.48\textwidth}
\includegraphics[width=1\linewidth,height=5.5cm]{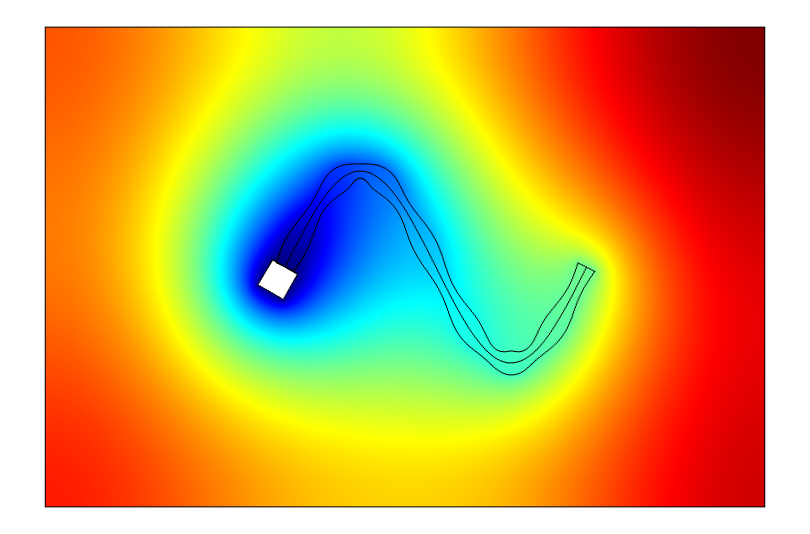}
\end{minipage}
\hspace{1mm}
\begin{minipage}{0.48\textwidth}
\includegraphics[width=1\linewidth,height=5.5cm]{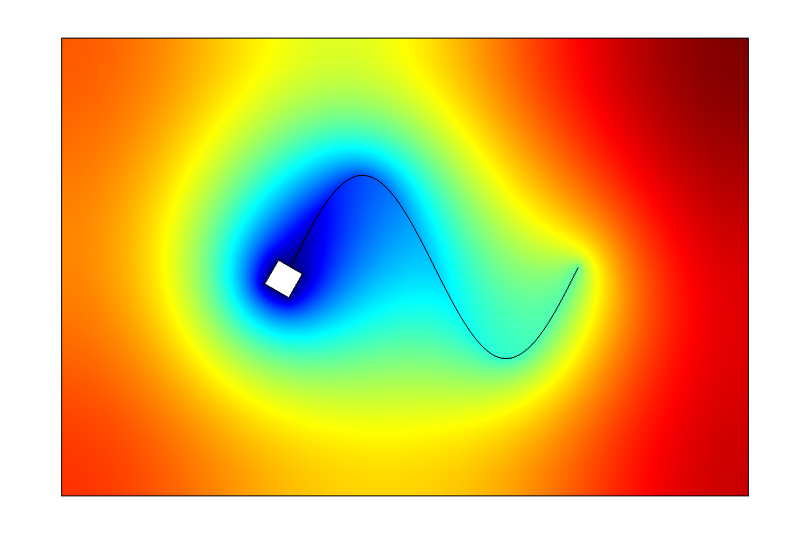}
\end{minipage}
\caption{{Coupled domain 1: pressure distributions in the coupled fracture porous media domain with $H=0.1$ and $\beta=0.001$, obtained using the original model (left) and the Reduced Model I (right), respectively.}}\label{C1}
\end{figure}

First, we couple the original flow equation inside the fracture  with the flow in the porous media, by imposing the continuity of the solutions and the continuity of the fluxes across the fracture boundaries. Zero Dirichlet boundary conditions are imposed on the well. 
Zero flux boundary conditions are imposed on all the outer boundaries of the reservoir and on the right end of the fracture. 
Then, we solve the coupled system using the flow equations in the Reduced Model I with zero Dirichlet boundary condition on the well and zero Neumann boundary condition on the right end of the fracture. 

Figure \ref{C1}, presents the pressure distributions in the coupled domain obtained using the original model (left) and the Reduced Model I (right), for $H=0.1$ and $\beta=0.001$. The colors indicate that the fluid first converges towards the fracture and 
then flows towards the well. In Figure \ref{Fc1}, we compare the pressure distributions obtained using the original model 
(on the barycentric line of the fracture cross section) and the Reduced Model I, for $\beta=0$ (Darcy) and $\beta=100$ (Forchheimer). 
It is evident that the solutions of the two models are very close to each other. 

\begin{figure}[h!]
\begin{minipage}{0.49\textwidth}
\includegraphics[width=1\linewidth,height=5.5cm]{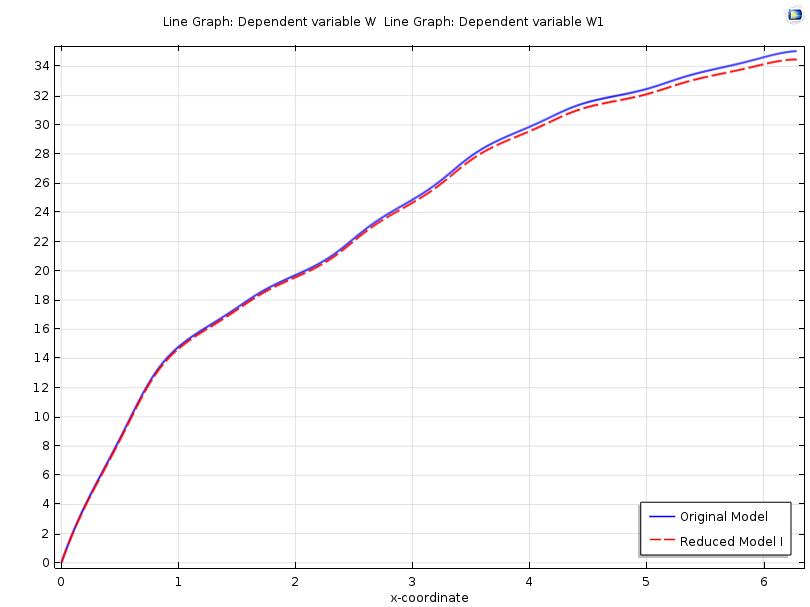}
\end{minipage}
\hspace{\fill}
\begin{minipage}{0.49\textwidth}
\includegraphics[width=1\linewidth,height=5.5cm]{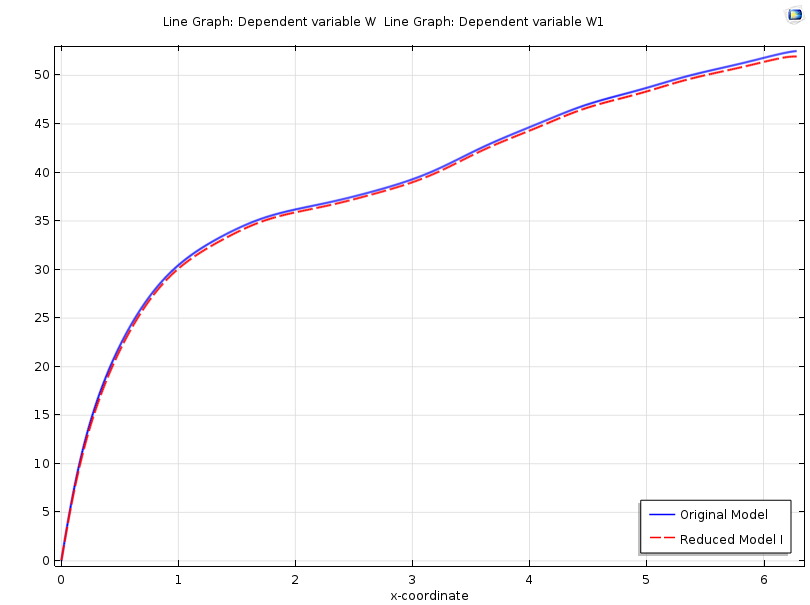}
\end{minipage}
\caption{Coupled domain 1: comparison between the pressure distributions in the coupled fracture porous media domain, obtained using the original model and the Reduced Model I, for $H=0.01$, $\beta=0$ (left) and $\beta=100$ (right), respectively.} \label{Fc1}
\end{figure}

Table \ref{Tc} presents the diffusive capacities in the coupled domain, obtained from the original model and the Reduced Model I, as $H$ and $\beta$ change. Moreover, the relative error, $\left|\frac{PI_{PD}-PI_{R_1}}{PI_{PD}}\right|$, has been reported. 

\begin{table}[h] 
	\begin{center}
	\caption{Coupled domain 1: comparison of the diffusive capacities obtained from the original model and the Reduced Model I.} \label{Tc}
	\scalebox{0.64}{	
	\begin{tabular}{|c|c|c|c|c|c|c|c|c|c|}
	\hline	
	\backslashbox[2cm]{$\beta$}{\vspace{-4mm}H} &\multicolumn{3}{c|}{0.01}& \multicolumn{3}{c|}{0.05}& \multicolumn{3}{c|}{0.1}\\
	\hline
\hfill &	$PI_{PD}$ & $PI_{R_1}$ & error & $PI_{PD}$ & $PI_{R_1}$	& error	& $PI_{PD}$	& $PI_{R_1}$ & error\\
\hline
0	&0.035027478	&0.03524709	&6.27E-03	&0.05875387&	0.057609773	&1.95E-02	&0.076646162	&0.071405654	&6.84E-02\\
0.001	&0.034974637	&0.03519495	&6.30E-03	&0.058705992&	0.057564145	&1.95E-02	&0.076612623	&0.071375039	&6.84E-02\\
1	&0.027792767	&0.028039564	&8.88E-03	&0.041956574&	0.041597979	&8.55E-03	&0.058528659	&0.055027505	&5.98E-02\\
10	&0.025385268	&0.025591167	&8.11E-03	&0.030338957&	0.03057647	&7.83E-03	&0.037144807	&0.036224497	&2.48E-02\\
50	&0.024668646	&0.024847989	&7.27E-03	&0.026591578&	0.027038853	&1.68E-02	&0.02924881	&0.029500624	&8.61E-03\\
100	&0.024491447	&0.024661886	&6.96E-03	&0.025662443&	0.026149571	&1.90E-02	&0.027264275	&0.027825031&	2.06E-02
\\
\hline
\end{tabular} }
	\end{center}
\end{table}
}
\end{example}

\begin{example} \label{Ex7} {\rm
Next, we evaluate the diffusive capacity in an infinite long reservoir whose cross section is the rectangle $[-15,15]\times[-10,10]$. The well is modeled as an infinite long cylindrical surface centered on the $y$-axis, with rectangular cross section of height $0.5$ and width $20$. The reservoir contains three fractures whose geometries are identical, and are described by the barycentric surface given by $$\mathbf{r}_i(u,v)=\mathbf r_{0,i} + A_i \left\langle u,v,2\sin(u)\right\rangle \mbox{ with } (u,v)\in \left[0,2\pi\right]\times(-\infty,\infty), \; i=1,2,3, $$ and variable thickness $2h(u)=H(2+0.5\sin(7u))$.
In here $$\mathbf r_{0,1} = \langle 0, 0 , 0.25 \rangle,\quad \mathbf r_{0,2} = \langle -5, 0 ,-0.25 \rangle ,\quad \mathbf r_{0,3} = \langle 5, 0 ,-0.25 \rangle,$$
and $A_i$ are rotation matrices given by
$$A_i = \begin{bmatrix} 
\cos(\theta_i)& 0 & -\sin(\theta_i) \\
0 & 1 & 0 \\ 
\sin(\theta_i)& 0 & \cos(\theta_i) \end{bmatrix},  \; \mbox{ with } \; \theta_1 = \frac{\pi}{2}, \; \mbox{ and } \; \theta_2 = \theta_3 =\frac{3 \pi}{2}.  
$$

Again, since the solution of the problem does not depend on $y$, we solve our equations only on the cross section of the domain given in Figures \ref{H1} (left and right).
\begin{figure}[h!]
\begin{minipage}{0.48\textwidth}
\includegraphics[width=1\linewidth,height=5.5cm]{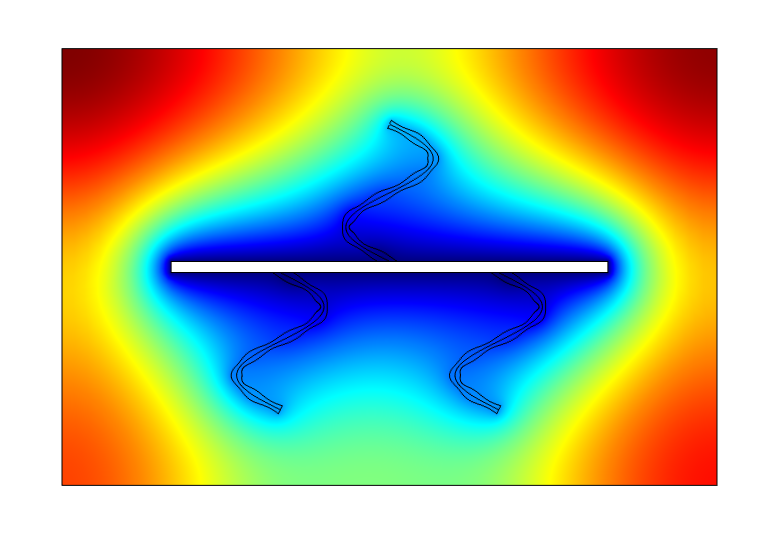}
\end{minipage}
\hspace{1mm}
\begin{minipage}{0.48\textwidth}
\includegraphics[width=1\linewidth,height=5.5cm]{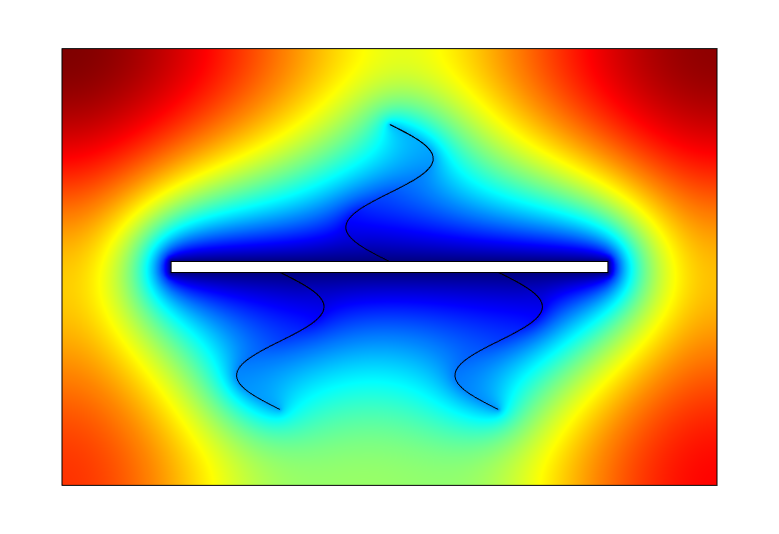}
\end{minipage}
\caption{Coupled domain 2: pressure distributions in the coupled fracture porous media domain with $H=0.1$ and $\beta=1$, obtained using the original model (left) and the Reduced Model I (right), respectively.}\label{H1}
\end{figure}

First, we couple the original flow equation inside the fracture with the flow in the porous media, by imposing the continuity of the solutions and the continuity of the fluxes across the fracture boundaries. Zero Dirichlet boundary conditions are imposed on the well. 
Zero flux boundary conditions are imposed on all the outer boundaries of the reservoir and on the ends of the fractures. 
Then, we solve the coupled system using the flow equations in the Reduced Model I, with zero Dirichlet boundary condition on the well and zero Neumann boundary condition on the ends of the fractures. 

Figure \ref{H1} presents the pressure distributions in the coupled fracture porous media domain, obtained from original model (left) and the Reduced Mode I (right), respectively, for $H=0.1$ and $\beta=1$. The colors indicate that the fluid first converges towards the fracture and then flows towards the well.
Table \ref{Tc2} presents the diffusive capacities in the coupled domain, obtained using the original model and the Reduced Model I, as $H$ and $\beta$ change. Moreover, the relative error has been reported.

\begin{table}[h]
	\begin{center}
	\caption{Coupled domain 2: comparison of the diffusive capacities obtained from the original model and the Reduced Model I.}\label{Tc2}
	\scalebox{0.64}{	
	\begin{tabular}{|c|c|c|c|c|c|c|c|c|c|}
	\hline	
	\backslashbox[2cm]{$\beta$}{\vspace{-4mm}H} &\multicolumn{3}{c|}{0.01}& \multicolumn{3}{c|}{0.05}& \multicolumn{3}{c|}{0.1}\\
	\hline
\hfill &	$PI_{PD}$ & $PI_{R_1}$ & error & $PI_{PD}$ & $PI_{R_1}$	& error	& $PI_{PD}$	& $PI_{R_1}$ & error\\
\hline
0	&0.149934761	&0.150403058&	3.12E-03&	0.184953216&	0.182806385	&1.16E-02	&0.209349275&	0.201457857	&3.77E-02\\
0.001	&0.149925428	&0.150393766&	3.12E-03&	0.184940273&	0.182794318&	1.16E-02&	0.209339858&	0.201449376&	3.77E-02\\
1	&0.145535944&	0.145975567&	3.02E-03&	0.176042698&	0.174430344	&9.16E-03&	0.201650306&	0.194507842&	3.54E-02\\
10	&0.140394828&	0.140694173&	2.13E-03&	0.157643454&	0.156802125&	5.34E-03&	0.177091286&	0.172336013&	2.69E-02\\
50&	0.138128196&	0.138326124&	1.43E-03&	0.147357331&	0.146777702	&3.93E-03&	0.158752511&	0.155725614&	1.91E-02\\
100	&0.137503318&	0.137668886&	1.20E-03&	0.144376243&	0.143869846	&3.51E-03&	0.153026912&	0.150547714&	1.62E-02\\
\hline
\end{tabular} }
	\end{center}
\end{table}

\begin{table}[h]
	\end{table}}
\end{example}
Both Examples \ref{Ex6} and \ref{Ex7} show similar observations. Tables \ref{Tc} and \ref{Tc2} confirm that the diffusive capacities obtained from both models increase with increasing fracture thickness, for all $\beta$. 
At the same time, their values decrease, when the Forchheimer coefficient $\beta$ increases. This numerical results have a clear physical interpretation.
Moreover, we can see the error is small for all values of $H$ and $\beta$. The error increases with $H$, while it has a slight dependence on $\beta$. 
Obtained results show that, as $H\approx 10^{-2}$, the errors are very small, and therefore the Reduced Model I can be effectively used in large scale simulators for long and thin fractures with complicated geometries.

\section{Conclusion}

In this paper, we investigated the flow filtration process of slightly compressible fluids in porous media containing fractures with complex geometries. We modeled the coupled fractured porous media system where the linear Darcy flow is considered in porous media and the nonlinear Forchheimer equation is used inside the fracture. Using methods in differential geometry we formulated the fracture as a manifold immersed in Porous media. The equation for pressure of the flow inside the fracture was modeled and then a reduced model, where the fracture is presented as a boundary inside porous media, was obtained. Theoretical and numerical results were obtained to prove the closeness of the solutions of the actual model and the solutions of the reduced model, both inside the fracture and in the coupled domain.

The main conclusion can be formulated as: for actual field data, where the thickness of the fracture is small compared to the length of the fracture, the reduced model can be effectively
used in large scale simulators for long and thin fractures with complicated geometry.


Controlling the shape of the fractures in geological reservoirs is a challenging problem. Therefore our method can be applied mostly for simple fracture geometries. However, the geometric method and the analysis we introduced in this paper are valuable tools in modeling micro fluidic flows and blood flows in arteries and veins. Moreover, we believe that this methodology can be served as a foundation for reservoir engineers to model fractures in the future.

\section*{Acknowledgement}
This work was supported by the National Science Foundation grant NSF-DMS 1412796.

\bibliographystyle{plain}
\bibliography{pushpi}

\end{document}